\theoremstyle{plain}
\newtheorem{thm}{Theorem}[section]
\newtheorem{lem}[thm]{Lemma}
\newtheorem{conj}[thm]{Conjecture}
\newcommand\MOD{\textrm{ (mod }}
\newcommand\Tr{\mathrm{Tr}}
\newcommand\bb{\mathbb}
\newcommand\primesum{\sideset{}{'}\sum}
\newcommand\primeprod{\sideset{}{'}\prod}
\begin{document}

\title{The covariance of almost-primes in $\mathbb{F}_q[T]$}
\author{Brad Rodgers}
\address{Institut f\"{u}r Mathematik, Universit\"{a}t Z\"{u}rich, Winterthurerstr. 190, CH-8057 Z\"{u}rich}
\email{brad.rodgers@math.uzh.ch}
\date{}

\begin{abstract}
We estimate the covariance in counts of almost-primes in $\bb F_q[T]$, weighted by higher-order von Mangoldt functions. The answer takes a pleasant algebraic form. This generalizes recent work of Keating and Rudnick that estimates the variance of primes, and makes use, as theirs, of a recent equidistribution result of Katz. 

In an appendix we prove some related identities for random matrix statistics, which allows us to give a quick proof of a $2\times2$ ratio theorem for the characteristic polynomial of the unitary group. We additionally identify arithmetic functions whose statistics mimic those of hook Schur functions.
\end{abstract}

\keywords{Arithmetic in function fields, almost-primes, random matrices}

\maketitle

\section{Introduction}

\subsection{} 
The purpose of this note is to generalize a recent estimate of Keating and Rudnick \cite{KeRu} for the variance in counts of primes in $\bb F_q[T]$. Here $\bb F_q[T]$ is the ring of polynomials with coefficients in the finite field of $q$ elements. We generalize their result to an estimate for the covariance in counts of almost-primes. 

An `almost-prime' designates for our purposes an element whose factorization contains no more than $j$ distinct prime (that is, irreducible) factors, for a fixed number $j$, referred to as the order. Such almost-primes may be weighted by a function field analogue of higher-order von Mangoldt functions (famously used by Selberg and Erd\H{o}s in an elementary proof of the prime number theorem). What is especially curious is that weighted in this manner, the covariance of almost-primes of different orders takes on a simple and pleasant algebraic form. 

We use the estimate over $\bb F_q[T]$ to provide evidence for a conjecture made in the author's thesis for the covariance of counts of almost-primes among the integers.

\subsection{}
We begin with a brief review of the situation in the integers. Here the prime number theorem tells us that the count of prime numbers between $1$ and $x$ is asymptotic to $x/\log x$, or equivalently that $\psi(x) \sim x$, where as usual
$$
\psi(x):= \sum_{n\leq x} \Lambda(n),
$$
and $\Lambda(n)$ is defined to be $\log p$ if $n=p^k$ a prime power, and $0$ otherwise.

It follows that if we define a count of primes around $x$ in a short interval of size $H$ by
$$
\psi(x;H):=\sum_{x< n \leq x+H} \Lambda(n),
$$
then $\psi(x;H)$ will take roughly the value $H$, for $x$ taken on average. 

Building on work of Gallagher and Mueller \cite{GaMu}, the variance of these counts was studied by Goldston and Montgomery \cite{GoMo}, who show that the following conjecture is equivalent to a strong form of Montgomery's pair correlation conjecture for the zeros of the zeta function.

\begin{conj}[Goldston-Montgomery]
\label{Prime_var}
Define
\begin{equation}
\label{prime_oscil}
\widetilde{\psi}(x;H) = \psi(x;H)-H.
\end{equation}
For any fixed $\epsilon > 0$, 
\begin{equation}
\label{prime_var}
\frac{1}{X}\int_0^X \widetilde{\psi}(x;H)^2\,dx \sim H (\log X - \log H),
\end{equation}
uniformly for $H\in [1,X^{1-\epsilon}]$.
\end{conj}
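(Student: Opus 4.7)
The plan is to follow the strategy of Goldston and Montgomery and reduce \eqref{prime_var} to a strong form of Montgomery's pair correlation conjecture for the nontrivial zeros of $\zeta$. First I would apply Riemann's explicit formula to write, under the Riemann hypothesis,
$$\widetilde\psi(x;H) = -\sum_\gamma \frac{(x+H)^{1/2+i\gamma}-x^{1/2+i\gamma}}{1/2+i\gamma} + O\bigl((\log xH)^2\bigr),$$
where $\gamma$ ranges over the ordinates of the nontrivial zeros $\rho = 1/2+i\gamma$. After squaring and averaging over $x\in[0,X]$, a Parseval-type calculation converts the variance into a double sum over pairs $(\gamma,\gamma')$ with an explicit oscillatory kernel obtained from the integral of $x^{i(\gamma-\gamma')}$ over the range of averaging.

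The next step is to split this double sum into a diagonal and an off-diagonal piece. The diagonal $\gamma=\gamma'$, combined with the Riemann--von Mangoldt zero density $\sim\tfrac{1}{2\pi}\log(\gamma/2\pi)$, should be the source of the main term: a short Plancherel computation yields exactly $H(\log X-\log H)$. The off-diagonal terms are governed by Montgomery's pair correlation function
$$F(\alpha,T)=\frac{2\pi}{T\log T}\sum_{0<\gamma,\gamma'\leq T} T^{i\alpha(\gamma-\gamma')}\,\frac{4}{4+(\gamma-\gamma')^2},$$
and a Mellin--Fourier analysis of the kernel shows that \eqref{prime_var} is essentially equivalent to $F(\alpha,T)\to 1$ uniformly on compact subsets of $[1,A]$ as $T\to\infty$, for every fixed $A>1$.

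The main obstacle is of course this strong pair correlation input. Montgomery established $F(\alpha,T)\to 1$ only for $|\alpha|<1$, and while nontrivial bounds are available for $|\alpha|\geq 1$ via Selberg-type inequalities, the precise asymptotic in the range needed to cover the full interval $H\in[1,X^{1-\epsilon}]$ remains open. This is exactly the difficulty Goldston and Montgomery identified, and it is the reason \eqref{prime_var} is stated here as a conjecture rather than a theorem; producing a genuine proof would seem to require a substantial new input either for the pair correlation of zeros or for the moments of $\widetilde\psi(x;H)$ themselves.
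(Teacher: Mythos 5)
You have correctly recognized that this is a \emph{conjecture}, not a theorem: the paper offers no proof and merely cites Goldston and Montgomery for the fact that \eqref{prime_var} is equivalent to a strong form of Montgomery's pair correlation conjecture. Your sketch of that equivalence --- explicit formula under RH, Parseval to reduce the variance to a double sum over zeros, diagonal contribution giving $H(\log X - \log H)$, off-diagonal governed by $F(\alpha,T)$, and the obstruction that $F(\alpha,T)\to 1$ is only known for $|\alpha|<1$ --- is an accurate account of the Goldston--Montgomery reduction and of why the statement remains open, which is all that can be said here.
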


\subsection{}
A function field analogue of this conjecture was recently proved by Keating and Rudnick, with an equidistribution result of Katz \cite{Ka} playing the crucial role that Montgomery's pair correlation conjecture plays in Conjecture \ref{Prime_var}.

Over $\bb F_q[T]$,  the von Mangoldt function is defined for a polynomial $f$ by
$$
\Lambda(f):= \begin{cases}
    \deg P, & \text{if $f=cP^k$ for $c\in \bb F_q$ and $P$ an irreducible polynomial}.\\
    0, & \text{otherwise}.
  \end{cases}
$$
For notational reasons, throughout this paper we let $\mathcal{M}_n\subset \mathbb{F}_q[T]$ denote the collection of monic polynomials of degree $n$. The prime number theorem in $\bb F_q[T]$ is that
\begin{equation}
\label{PNT_fin}
\sum_{f\in\mathcal{M}_n}\Lambda(f) = q^n.
\end{equation}
The reader should check that
$$
\sum_{f\in\mathcal{M}_n} 1 = q^n.
$$
This information is summarized by the zeta function\footnote{The function 
$$
\zeta_{\bb F_q[T]}(s) := \mathcal{Z}(q^{-s}) = \primesum \frac{1}{|f|^s}
$$
may make more clear the analogy to the classical Riemann zeta function.} over $\bb F_q[T]$, defined for $|u|<1/q$ by
\begin{align}
\label{zeta_def}
\notag \mathcal{Z}(u) &:=\primeprod_{P} \frac{1}{1-u^{\deg (P)}} \\
&= \primesum_{f} u^{\deg(f)} \\
\notag &= \frac{1}{1-qu},
\end{align}
where above and in what follows a primed sum or product indicates an additional restriction to monic polynomials; thus the Euler product above is over all monic irreducible polynomials, and likewise the primed sum is over all monic polynomials.

In setting up a $\bb Z$-to-$\bb F_q[T]$ dictionary, the degree of a polynomial is akin to the logarithm of an integer, while $|f|:=q^{\deg f}$ may be thought of as corresponding to the absolute-value of an integer. By analogy, a short-interval containing a polynomial $f$ of degree $n$ is defined to be the collection of polynomials
$$
I(f;h) := \{g: \deg(f-g) \leq h\} = \{g: |f-g| \leq q^h\},
$$
where $h < n$. (Note that for $f$ monic, the short interval $I(f;h)$ defined above will consist exclusively of monic polynomials.)

We define, for a polynomial $f$, the count of primes in a short interval around $f$,
$$
\Psi(f;h):= \sum_{g\in I(f;h)} \Lambda(g).
$$
and also define the count minus its average value,
$$
\widetilde{\Psi}(f;h):= \sum_{g\in I(f;h)} (\Lambda(g)-1).
$$
(Note that $\sum_{g\in I(f;h)} 1 = q^{h+1}.$)

With this set up, we have

\begin{thm}[Keating-Rudnick]
\label{Prime_var_fin}
For fixed $0 \leq h \leq n-4$,
$$
\lim_{q\rightarrow\infty} \frac{1}{q^{h+1}} \bigg(\frac{1}{q^n}\sum_{f\in \mathcal{M}_n} \widetilde{\Psi}(f;h)^2\bigg) = n-h-2.
$$
\end{thm}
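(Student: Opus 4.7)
My plan is to follow the strategy of Keating-Rudnick: convert the short-interval sum to a sum over Dirichlet characters, apply the function-field explicit formula, and invoke Katz's equidistribution theorem together with a standard unitary matrix integral.

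The first step is the involution $f \mapsto f^*(T) := T^n f(1/T)$, sending a monic polynomial of degree $n$ to one with constant term $1$. A direct check gives, for monic $f,g \in \mathcal{M}_n$, that $g \in I(f;h)$ if and only if $g^* \equiv f^* \pmod{T^{n-h}}$. Short intervals thus correspond to cosets in $(\mathbb{F}_q[T]/T^{n-h})^\times/\mathbb{F}_q^\times$, and the relevant dual group is that of \emph{even} Dirichlet characters modulo $T^{n-h}$ (trivial on $\mathbb{F}_q^\times$), of which there are $q^{n-h-1}$. Character orthogonality gives
$$
\widetilde{\Psi}(f;h) = \frac{1}{q^{n-h-1}} \sum_{\chi \neq \chi_0} \overline{\chi}(f^*)\,\psi(n,\chi), \qquad \psi(n,\chi) := \sum_{g \in \mathcal{M}_n} \Lambda(g)\chi(g^*),
$$
the trivial-character contribution having cancelled the mean $q^{h+1}$ that was subtracted off. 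Squaring, averaging over $f \in \mathcal{M}_n$, and applying orthogonality a second time collapses the double character sum to its diagonal:
$$
\frac{1}{q^n}\sum_{f\in\mathcal{M}_n}\widetilde{\Psi}(f;h)^2 = \frac{1}{q^{2(n-h-1)}}\sum_{\chi \neq \chi_0}|\psi(n,\chi)|^2.
$$

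For each primitive nontrivial even $\chi$, the Dirichlet $L$-function $L(u,\chi)$ carries a trivial zero at $u=1$; dividing it out leaves a polynomial of degree $N := n-h-2$ whose inverse zeros, by Weil's Riemann hypothesis, have modulus $\sqrt{q}$. Writing this factor as $\det(I - u\sqrt{q}\,\Theta_\chi)$ with $\Theta_\chi \in U(N)$, the explicit formula gives $|\psi(n,\chi)|^2 = q^n|\Tr(\Theta_\chi^n)|^2$, up to lower-order contributions from the trivial zero and from imprimitive characters (the latter being $O(q^{n-h-2})$ in number). Substitution yields
$$
\frac{1}{q^{h+1}}\cdot\frac{1}{q^n}\sum_{f}\widetilde{\Psi}(f;h)^2 = \frac{1}{q^{n-h-1}}\sum_{\chi \neq \chi_0}|\Tr(\Theta_\chi^n)|^2 + o(1),
$$
which is the average of $|\Tr(\Theta_\chi^n)|^2$ over nontrivial even characters modulo $T^{n-h}$.

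The principal input, and the main obstacle, is Katz's theorem: as $q \to \infty$, the conjugacy classes $\Theta_\chi$ equidistribute in a projective unitary group $PU(N)$ with respect to Haar measure. Since $|\Tr(U^n)|^2$ is invariant under multiplication of $U$ by scalars and hence descends to $PU(N)$, the average above converges to $\int_{U(N)}|\Tr(U^n)|^2\,dU$, which by the classical formulas of Diaconis-Shahshahani equals $\min(n,N) = N = n-h-2$. The character manipulations and the explicit formula are essentially routine; the depth lies entirely in Katz's equidistribution result and verifying that it applies to the family of even characters modulo $T^{n-h}$.
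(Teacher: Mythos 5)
Your proposal is correct and follows exactly the approach of Keating--Rudnick, which is the same strategy the paper uses (with heavier notation) to prove the generalization \eqref{Covar_3_fin}: the reversal involution to translate short intervals into residue classes modulo $T^{n-h}$, orthogonality restricted to even characters, the explicit formula plus Weil's RH to isolate the primitive-character contribution $q^n|\Tr(\Theta_\chi^n)|^2$, Katz's equidistribution in $PU(n-h-2)$, and Dyson's integral formula \eqref{Dyson}. The details you wave off as routine (the polynomials vanishing at $0$, the small error from the trivial character, and the imprimitive characters) are exactly the points the paper handles carefully in Steps 2, 3, and 5, but your sketch accounts for their size correctly and no idea is missing.
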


As the authors note, this may be compared to Conjecture \ref{Prime_var} with the dictionary\footnote{That the variance in Theorem \ref{Prime_var_fin} is $n-h-2$ rather than $n-h-1$ may seem surprising at first, but one must keep in mind that the expression  \eqref{prime_oscil} in Conjecture \ref{Prime_var} is only an asymptotic formula, and when $n-h$ is large, $n-h-1\sim n-h-2$. Indeed, there is a refinement of Conjecture \ref{Prime_var} due to Montgomery and Soundararajan \cite{MoSo2} in which the left hand side of \eqref{prime_oscil} is conjectured to be
$$
H(\log X - \log H - (\gamma+\log 2\pi) + o(1)),
$$
in closer agreement with Theorem \ref{Prime_var_fin}.}
\begin{equation}
\label{dictionary}
X \leftrightarrow q^n, \quad H \leftrightarrow q^{h+1}, \quad  \log X \leftrightarrow n, \quad \log H \leftrightarrow h+1.
\end{equation}

\subsection{}
We generalize the variance estimate in Theorem \ref{Prime_var_fin} to an estimate for the covariance of counts of almost-primes. We count almost-primes of order $j$ by means of higher-order von Mangoldt functions, defined inductively in the following way:
\begin{equation}
\label{highvm_1}
\Lambda_1(f):= \Lambda(f)
\end{equation}
and for $j\geq 2$,
\begin{equation}
\label{highvm_2}
\Lambda_j(f):= \primesum_{d|f}\Lambda_{j-1}(d)\Lambda(f/d) + \Lambda_{j-1}(f)\deg(f),
\end{equation}
where the sum is over only monic polynomials $d$ dividing $f$. It is evident from this definition that $\Lambda_j$ is supported on polynomials that have no more than $j$ distinct irreducible factors. It is useful to also define $\Lambda_0(f)$ to be $1$ if $f\in \bb F_q$ and $0$ otherwise. Note that for a unit $c\in \bb F_q^\times$, we have $\Lambda_j(cf) = \Lambda_j(f)$.

$\Lambda_j$ can also be written more succinctly
\begin{equation}
\label{Lambda_j}
\Lambda_j(f) = \primesum_{d|f} \mu(d) \deg^j(f/d)
\end{equation}
where $\mu$ is the M\"{o}bius function, $\mu(d):=(-1)^k$ if $d = cP_1\cdots P_k$, for distinct irreducible polynomials $P_i\in \bb F_q[T]$ and $c\in \bb F_q$; and $\mu(d)=0$ otherwise.

$\Lambda_j$ is not a multiplicative function, but from \eqref{Lambda_j}, one may prove that for $f$ and $g$ coprime polynomials,
\begin{equation}
\label{Lambda_fac}
\Lambda_j(fg) = \sum_{\ell=0}^j {j \choose \ell} \Lambda_{\ell}(f) \Lambda_{j-\ell}(g).
\end{equation}

Clearly $\Lambda_j(f)$ is never negative. From \eqref{Lambda_j}, we see that $\primesum_{d|f} \Lambda_j(d) = \deg^j(f)$, and so we have also the upper bound,
\begin{equation}
\label{vm_bound}
\Lambda_j(f) \leq \deg^j(f).
\end{equation}

By using the definition \eqref{highvm_1} and \eqref{highvm_2} of $\Lambda_j$ and inducting on the relation \eqref{PNT_fin}, one may verify that
\begin{equation}
\label{vm_average}
\sum_{f\in \mathcal{M}_n} \Lambda_j(f) = q^n(n^j-(n-1)^j),
\end{equation}
so that
$$
\widetilde{\Lambda}_j(f):= \Lambda_j(f) - (n^j-(n-1)^j),\quad \textrm{for $n := \deg(f)$},
$$
is on average $0$. We define\footnote{Note that here, our notational convention for $\widetilde{\Psi}$ differs from that of \cite{KeRu}.}
$$
\widetilde{\Psi}_j(f;h):= \sum_{g\in I(f;h)} \widetilde{\Lambda}_j(g),
$$
which is a count in a short interval around $f$ of almost-primes minus the average value of the count.

Our main result is as follows:
\begin{thm}
\label{Covar_fin}
For $j,k \geq 1$,
\begin{equation}
\label{Covar_1_fin}
\lim_{q\rightarrow\infty} \frac{1}{q^n}\sum_{g\in \mathcal{M}_n} \Lambda_j(f)\Lambda_k(f) = \sum_{d=1}^n (d^j-(d-1)^j)(d^k-(d-1)^k),
\end{equation}
\begin{equation}
\label{Covar_2_fin}
\lim_{q\rightarrow\infty}\frac{1}{q^n} \sum_{f\in\mathcal{M}_n} \widetilde{\Lambda}_j(f)\widetilde{\Lambda}_k(f) = \sum_{d=1}^{n-1} (d^j-(d-1)^j)(d^k-(d-1)^k),
\end{equation}
and for $0\leq h\leq n-4$,
\begin{equation}
\label{Covar_3_fin}
\lim_{q\rightarrow\infty}\frac{1}{q^{h+1}}\bigg(\frac{1}{q^n} \sum_{f\in\mathcal{M}_n} \widetilde{\Psi}_j(f;h)\widetilde{\Psi}_k(f;h)\bigg) = \sum_{d=1}^{n-h-2} (d^j-(d-1)^j)(d^k-(d-1)^k).
\end{equation}
\end{thm}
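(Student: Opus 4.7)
My plan is to prove \eqref{Covar_1_fin} first via a random-permutation identity, derive \eqref{Covar_2_fin} as an immediate corollary, and attack \eqref{Covar_3_fin} separately using the Keating--Rudnick framework together with Katz's equidistribution theorem.

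For \eqref{Covar_1_fin}, the large-$q$ limit is governed entirely by squarefree $f\in\mathcal{M}_n$: non-squarefree polynomials form an $O(q^{n-1})$ set and $\Lambda_j\Lambda_k=O(n^{j+k})$ by \eqref{vm_bound}, so their contribution is negligible. Standard counting shows the proportion of squarefree $f\in\mathcal{M}_n$ whose irreducible-factor degrees realize a partition $\lambda\vdash n$ tends to $1/z_\lambda$, so the limit equals $\mathbb{E}_{\sigma\in S_n}[\Lambda_j(\sigma)\Lambda_k(\sigma)]$, with $\Lambda_j$ read as a symmetric function of the cycle lengths. A quick expansion of \eqref{Lambda_j} on squarefree $f$ yields the exponential generating function
\[
\sum_{j\geq 0}\Lambda_j(f)\frac{x^j}{j!}=\prod_c\bigl(e^{x|c|}-1\bigr),
\]
so by the exponential formula for permutations,
\[
\sum_{n\geq 0}\frac{z^n}{n!}\sum_{\sigma\in S_n}\prod_c(e^{x|c|}-1)(e^{y|c|}-1)=\frac{(1-ze^x)(1-ze^y)}{(1-ze^{x+y})(1-z)}.
\]
A short telescoping computation --- writing $(e^{dx}-e^{(d-1)x})(e^{dy}-e^{(d-1)y})$ as $e^{(d-1)(x+y)}(e^x-1)(e^y-1)$ and summing a geometric series in $d$ --- identifies the right-hand side with $1+\tfrac{z(e^x-1)(e^y-1)}{(1-ze^{x+y})(1-z)}$, which is precisely the trivariate generating function of $\sum_{d=1}^n(d^j-(d-1)^j)(d^k-(d-1)^k)$, proving \eqref{Covar_1_fin}. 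Equation \eqref{Covar_2_fin} is then immediate: $\widetilde\Lambda_j$ has mean zero by \eqref{vm_average}, so $\tfrac{1}{q^n}\sum_f\widetilde\Lambda_j\widetilde\Lambda_k$ differs from the left-hand side of \eqref{Covar_1_fin} by exactly $(n^j-(n-1)^j)(n^k-(n-1)^k)$, which removes precisely the $d=n$ term.

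For \eqref{Covar_3_fin} I would follow the Keating--Rudnick route. Under the involution $f\mapsto T^{\deg f}f(1/T)$, short intervals $I(f;h)$ become arithmetic progressions modulo $T^{n-h}$, and character orthogonality converts $\sum_f\widetilde\Psi_j\widetilde\Psi_k$ into an average, over nontrivial even Dirichlet characters $\chi$ modulo $T^{n-h}$, of quantities of the form
\[
C_j(\chi):=\sum_{f\in\mathcal{M}_n}\chi(f)\Lambda_j(f)=[u^n]\frac{(u\partial_u)^jL(u,\chi)}{L(u,\chi)}.
\]
Each $C_j(\chi)$ is an explicit polynomial in the $\sqrt{q}$-unitarized Frobenius power sums of the degree $N:=n-h-2$ polynomial $L(u,\chi)$; Katz's equidistribution theorem then, in the $q\to\infty$ limit, replaces the character average by the Haar integral $\int_{U(N)}\widetilde c_j(U)\overline{\widetilde c_k(U)}\,dU$, where $\widetilde c_j(U):=[u^n]\bigl((u\partial_u)^j\det(1-uU)/\det(1-uU)\bigr)$. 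The proof of \eqref{Covar_3_fin} reduces to showing that this matrix integral equals $\sum_{d=1}^N(d^j-(d-1)^j)(d^k-(d-1)^k)$.

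The main obstacle is this matrix integral. Because $n>N$, the classical Diaconis--Shahshahani trace-moment identities apply only in their ``saturated'' form, and multi-trace products $\prod_i\mathrm{Tr}(U^{k_i})$ with $\sum k_i=n>N$ interact delicately with the rank-$N$ truncation of $\det(1-uU)$. I expect the cleanest approach will come from the random-matrix identities promised in the paper's appendix: reinterpreting $\widetilde c_j(U)$ as the $n$th higher von Mangoldt coefficient of the virtual zeta function $\det(1-uU)$ of a rank-$N$ system should make the truncation at $d=N$ appear naturally as the degree cutoff of that virtual zeta function, and ought to unify \eqref{Covar_1_fin} and \eqref{Covar_3_fin} into a single formula valid for all $N$ with upper summation limit $\min(n,N)$.
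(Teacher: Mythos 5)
For \eqref{Covar_1_fin} and \eqref{Covar_2_fin}, your argument is correct and takes a genuinely different route from the paper. The paper writes the generating function
\[
\Xi(s_1,s_2;x)=1+\sum_{j,k\geq 1}\frac{s_1^j}{j!}\frac{s_2^k}{k!}\primesum_f\Lambda_j(f)\Lambda_k(f)x^{\deg f}
\]
directly as an Euler product over monic irreducibles using the M\"obius formula \eqref{Lambda_j}, approximates that Euler product by $\mathcal{Z}(x)\mathcal{Z}(xAB)/\mathcal{Z}(xA)\mathcal{Z}(xB)$ with a controlled error, and then takes the $q\to\infty$ limit followed by a coefficient extraction via contour integration and dominated convergence. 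You instead reduce to squarefree $f$ (correctly bounding the non-squarefree contribution via \eqref{vm_bound}), invoke the standard fact that the degree-partitions of squarefree monic polynomials equidistribute to the cycle-type measure $1/z_\lambda$ of random permutations, and then apply the exponential formula. Both paths land on the same rational generating function $\frac{(1-ze^x)(1-ze^y)}{(1-ze^{x+y})(1-z)}$; your path makes the permutation/polynomial dictionary explicit and isolates the $q\to\infty$ limit as a purely combinatorial statement, at the cost of an extra reduction step, whereas the paper's Euler-product approach keeps the error uniform in one sweep. Either is a legitimate proof. The derivation of \eqref{Covar_2_fin} from \eqref{Covar_1_fin} is the same in both.

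For \eqref{Covar_3_fin}, your outline tracks the paper's proof (involution $f\mapsto f^\ast$, character orthogonality, explicit formula, Katz equidistribution), but there is a genuine gap at the crux: you reduce everything to the evaluation of
\[
\int_{U(N)}H_j^{(n)}(g)\,\overline{H_k^{(n)}(g)}\,dg=\sum_{d=1}^{n\wedge N}(d^j-(d-1)^j)(d^k-(d-1)^k)
\]
with $N=n-h-2$, explicitly call it ``the main obstacle,'' and then only speculate about a possible route (``I expect the cleanest approach\ldots ought to unify\ldots''). No computation is offered. This is precisely the paper's Lemma \ref{H_Covar}, which it proves in the appendix by writing $H_j^{(r)}=\sum_{\nu=1}^{r\wedge N}(-1)^\nu\nu^j e_\nu h_{r-\nu}$ from the logarithmic-derivative expansion, applying the Pieri rule $e_\nu h_{r-\nu}=s_{(r-\nu+1,1^{\nu-1})}+s_{(r-\nu,1^{\nu})}$ to telescope the sum into a linear combination of hook Schur functions,
\[
H_j^{(r)}=\sum_{\nu=1}^{r\wedge N}(-1)^\nu(\nu^j-(\nu-1)^j)\,s_{(r-\nu+1,1^{\nu-1})},
\]
and then invoking Schur orthogonality \eqref{orthoschur}. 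Your guess that ``the truncation at $d=N$ appears naturally as the degree cutoff of that virtual zeta function'' is qualitatively right --- the cutoff comes from $e_\nu=0$ for $\nu>N$ --- but without the hook-Schur decomposition or an equivalent you do not have a proof. A secondary, more minor omission: you do not address the passage from $\widetilde\Psi_j$ to the coprime-to-$T$ variant $\widetilde\Psi_j^\natural$ (the paper's Step 5), nor the Riemann-hypothesis bound that controls the non-primitive even characters (the paper's Step 3), though both are routine in the Keating--Rudnick framework and I would not penalize their omission in a sketch.
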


Here \eqref{Covar_1_fin} and \eqref{Covar_2_fin} are elementary, and we give a short and self-contained proof of them below.\footnote{We note in passing that it is very natural to make the formal identification $\widetilde{\Psi}(f;-1) := \widetilde{\Lambda}_j$(f) in Theorem \ref{Covar_fin}.} \eqref{Covar_3_fin} on the other hand lies deeper, and depends on the result of Katz discussed below.

\subsection{}

Theorem \ref{Covar_fin} provides support for a generalization, made in the authors thesis \cite{Ro2, Ro1}, of Conjecture \ref{Prime_var}. We recall the conjecture here. Define $\Lambda_j$ over the integers in the same way as above, so that
$$
\Lambda_1(n):=\Lambda(n),
$$
$$
\Lambda_j(n) := \sum_{d\delta = n} \Lambda_{j-1}(d)\Lambda_1(\delta) + \log(n) \Lambda_{j-1}(n).
$$
Or alternatively,
$$
\Lambda_j(n) = \sum_{d\delta = n} \mu(d) \log^j \delta.
$$
As in the function field case, $\Lambda_j$ is supported on integers with no more than $j$ distinct prime divisors. It is well known (see \cite[pp. 299]{Iv} for instance) that
$$
\psi_j(x):= \sum_{n \leq x} \Lambda_j(n) = x P_{j-1}(\log x) + o(x)
$$
where $P_{j-1}$, defined by this relation, is a $j-1$ degree polynomial with 
$$
P_{j-1}(\log x) = j(\log x)^{j-1}+ o(\log^{j-1} x).
$$
We define
$$
\widetilde{\psi}_j(x):= \psi_j(x) - x P_{j-1}(\log x),
$$
and
$$
\widetilde{\psi}_j(x;H) := \widetilde{\psi}_j(x+H)-\widetilde{\psi}_j(x).
$$
Note that $\widetilde{\psi}_1(x;H) = \widetilde{\psi}(x;H)$. In general $\widetilde{\psi}_j(x;H)$ is a count of almost-primes in the interval $(x,x+H]$ minus a regular approximation to this count. By analogy with Theorem \ref{Covar_fin}, using the dictionary \eqref{dictionary} and the fact that
$$
\sum_{d=1}^{n-h-2}(d^j-(d-1)^j)(d^k-(d-1)^k) = \frac{jk}{j+k-1} (n-h)^{j+k-1} + O((n-h)^{j+k-2}),
$$
it is reasonable to generalize Conjecture \ref{Prime_var} as follows:

\begin{conj}
\label{APrime_covar}
For fixed $j,k \geq 1$ and $\epsilon \geq 0$,

$$
\frac{1}{X} \int_0^X \widetilde{\psi}_j(x;H)\, \widetilde{\psi}_k(x;H)\,dx \sim \frac{jk}{j+k-1} H \,(\log X - \log H)^{j+k-1},
$$

\vspace{2mm}
\noindent uniformly for $H\in[1,X^{1-\epsilon}]$.
\end{conj}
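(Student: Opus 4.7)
The plan is to follow the strategy Goldston and Montgomery used in the $j=k=1$ case of Conjecture \ref{APrime_covar} and reduce the general case to a quantitative hypothesis on correlations of zeros of $\zeta(s)$. The starting point is an explicit formula: since $\Lambda_j=\mu\star\log^j$ has Dirichlet series $(-1)^j\zeta^{(j)}(s)/\zeta(s)$, which (assuming simple zeros) has a simple pole of residue $\zeta^{(j)}(\rho)/\zeta'(\rho)$ at each nontrivial zero $\rho$, shifting the contour in Perron's formula gives
$$\widetilde{\psi}_j(x;H)=-\sum_\rho c_j(\rho)\,\frac{(x+H)^\rho-x^\rho}{\rho}+(\text{lower order}),\qquad c_j(\rho):=(-1)^j\frac{\zeta^{(j)}(\rho)}{\zeta'(\rho)},$$
so that $c_1(\rho)=-1$ recovers the classical formula used in \cite{GoMo}.

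Multiplying two such expansions and integrating in $x$ over $[0,X]$ reduces the covariance to a double sum over zero pairs $(\rho,\rho')$. The diagonal $\rho=\rho'$ contribution, extracted by a Plancherel-type identity as in \cite{GoMo}, is
$$\sum_\rho c_j(\rho)\overline{c_k(\rho)}\cdot\frac{1}{X}\int_0^X\left|\frac{(x+H)^\rho-x^\rho}{\rho}\right|^2 dx,$$
and, using the expected moment asymptotic $|c_j(\rho)c_k(\rho)|\sim jk(\log|\gamma|)^{j+k-2}$ (consistent with existing results on $\zeta^{(j)}(\rho)/\zeta'(\rho)$ on the critical line) together with the Riemann--von Mangoldt zero density $\frac{1}{2\pi}\log|\gamma|$, a direct computation should yield the target asymptotic $\frac{jk}{j+k-1}H(\log X-\log H)^{j+k-1}$. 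The constant matches via the identity $\sum_{d=1}^N(d^j-(d-1)^j)(d^k-(d-1)^k)\sim\frac{jk}{j+k-1}N^{j+k-1}$ noted in the excerpt. The off-diagonal terms $\rho\neq\rho'$ would be estimated against a strong form of Montgomery's pair correlation conjecture $F(\alpha,T)\sim\min(|\alpha|,1)$, localized by a kernel whose Fourier support is governed by the short-interval window $H$.

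The main obstacle is exactly the one preventing an unconditional proof at $j=k=1$: one needs Montgomery's pair correlation conjecture in a quantitative form valid on a wider range of the Fourier parameter $\alpha$ than is currently known. For $j,k\geq 2$ an additional issue is rigorous control over mixed moments $\sum_{|\gamma|<T}\zeta^{(j)}(\rho)\overline{\zeta^{(k)}(\rho)}/|\zeta'(\rho)|^2$, which amounts to a refinement of pair correlation against test functions with polynomial growth in $\log|\gamma|$. Granting such hypotheses, the derivation would parallel the function-field calculation underlying Theorem \ref{Covar_fin}, with Katz's equidistribution result replaced by the pair-correlation input.
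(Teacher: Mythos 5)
The statement under review is a \emph{conjecture}, and the paper does not prove it: the paper's justification is (i) direct analogy with the function field result, Theorem \ref{Covar_fin}, via the dictionary \eqref{dictionary} and the elementary asymptotic $\sum_{d\leq N}(d^j-(d-1)^j)(d^k-(d-1)^k)\sim\frac{jk}{j+k-1}N^{j+k-1}$, and (ii) a pointer to the author's thesis, where a variant is shown to follow from the hypothesis that \emph{all} $n$-level correlations of the zeros of $\zeta$ locally follow the GUE prediction. So your task is really to produce a plausible conditional derivation; you are not missing a proof that exists in the paper.

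That said, your sketch has a substantive gap in its choice of hypothesis. You propose to control the off-diagonal zero pairs using Montgomery's pair correlation conjecture, exactly as in Goldston--Montgomery for $j=k=1$. But for $j,k\geq 2$ the residues $c_j(\rho)=(-1)^j\zeta^{(j)}(\rho)/\zeta'(\rho)$ are not slowly-varying deterministic weights: they themselves encode local information about neighboring zeros (heuristically, $c_j(\rho)$ is a degree-$(j-1)$ polynomial in $\sum_{\rho'\neq\rho}(\rho-\rho')^{-1}$). Consequently the ``diagonal'' term $\sum_\rho c_j(\rho)\overline{c_k(\rho)}\,|\cdots|^2$ is already, in disguise, a $(j+k)$-point correlation sum, and the claimed average behaviour $|c_j(\rho)c_k(\rho)|\sim jk(\log|\gamma|)^{j+k-2}$ is itself a higher-correlation statement with no route from pair correlation alone. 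This is consistent with the paper's remark that the thesis derivation requires \emph{all} correlations of the zeros to be GUE-like. Moreover, discrete moments involving $1/\zeta'(\rho)$ (let alone $\zeta^{(j)}(\rho)\overline{\zeta^{(k)}(\rho)}/|\zeta'(\rho)|^2$) are notoriously delicate and not rigorously evaluated even conditionally, so this route is harder to make precise than you suggest. A cleaner path, more faithful to the function field analogy in the paper (where $H_j^{(n)}$ is a polynomial in $\Tr(g^\ell)$), is to use the convolution/recursion structure of $\Lambda_j$ to expand $\widetilde{\psi}_j(x;H)$ as a polynomial of degree $j$ in a single basic oscillatory sum over zeros, and then feed in GUE $n$-level correlations directly; this sidesteps the discrete moment problem entirely.
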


This is an estimate for the covariance of counts of almost-primes, and the simplicity of its form is closely related to the ratio conjecture of Farmer \cite{Fa}, a matter discussed at greater length in the appendix. It may be seen that it is possible to estimate the covariance of almost-primes counted by other weights, but these weights do not seem to produce so striking a pattern.

It is a combinatorial rearrangement of the prime number theorem that
$$
\frac{1}{X}\sum_{n\leq X} \Lambda_j(n)\Lambda_k(n) \sim \frac{jk}{j+k-1}\log^{j+k-1} X,
$$
in analogy of with \eqref{Covar_1_fin}.

In the author's thesis a variant of Conjecture \ref{APrime_covar}, with a weaker summability method, was shown to follow from the assumption that all correlations of the zeros of the zeta function locally follow the GUE (Gaussian Unitary Ensemble) random matrix prediction. In this connection, see also the earlier paper \cite{FaGoLeLe} of Farmer, Gonek, Lee, and Lester, who relate the covariance of almost-primes, weighted in a different manner than above, to the form factor of the zeta function's zeros.

It is reasonable to suppose that the statistics in Conjecture \ref{APrime_covar} are asymptotically normal as long as $H\rightarrow\infty$, and likewise in \eqref{Covar_3_fin}, but we do not pursue the matter here. (See \cite{MoSo} for a discussion of higher moments of $\widetilde{\psi}(x;\,H)$.)

\section{Preliminary material: Characters, explicit formulas, and equidistribution}

\subsection{}
In our proof of Theorem \ref{Covar_fin}, we will localize counts of almost-primes, following Keating and Rudnick, by applying harmonic analysis to a sum over Dirichlet characters. (In applying random matrix predictions to counts of almost-primes among the integers, one may make use instead of the characters $n^{-it}$.)

In the first half of this section we recall standard material explained succinctly in section 3 of \cite{KeRu}, and in greater length in, for instance, \cite{Ro}. In this section we adopt the notation of \cite{KeRu}. For $Q \in \bb F_q[T]$, $\Phi(Q)$ is defined as the order of the group $(\bb F_q[T] / Q)^\times$, that is the number of residues modulo $Q$ which are coprime to $Q$. A Dirichlet character modulo $Q$ is a function $\chi: \bb F_q[T] \rightarrow \bb C$ such that
\begin{enumerate}
\item $\chi(f) = \chi(g) \quad$ for all $f, g \in \bb F_q[T]$ such that $f \equiv g \MOD Q)$
\item $\chi(fg) = \chi(f)\chi(g) \quad$ for all $f,g \in \bb F_q[T]$.
\item $\chi(f) \neq 0\quad$ if and only if $(f,Q)\neq 1$.
\end{enumerate}
The number of Dirichlet characters modulo $Q$ is $\Phi(Q)$, and they satisfy the orthogonality relation
\begin{equation}
\label{Dirch_orth}
\frac{1}{\Phi(Q)} \sum_{\chi\MOD Q)} \overline{\chi(f)}\chi(g) = \begin{cases}
    1, & \text{if $f\equiv g \MOD Q)$ and $(fg, Q)=1$,}\\
    0, & \text{otherwise}.
  \end{cases}
\end{equation}
From this a standard argument shows that,
$$
\frac{1}{\Phi(Q)} \sum_{f \MOD Q)} \overline{\chi_1(f)} \chi_2(f) = \begin{cases}
    1, & \text{if $\chi_1 = \chi_2$,}\\
    0, & \text{otherwise,}
  \end{cases}
$$
though we will not need to reference this second fact in the argument that follows.

A Dirichlet character $\chi_2 \MOD Q_2)$ is said to be \emph{induced} by a Dirichlet character $\chi_1 \MOD Q_1)$, if $Q_1$ is a proper divisor of $Q_2$ and
$$
\chi_1(f) = [(f,Q_2)=1] \chi_2(f).
$$
Here, for a proposition $\mathfrak{A}$, we have used the notation,
$$
[\mathfrak{A}]:= 
\begin{cases}
    1, & \text{if $\mathfrak{A}$ is true,}\\
    0, & \text{if $\mathfrak{A}$ is false.}
  \end{cases}
$$

If $\chi_2$ is not induced by any characters, it is said to be a \emph{primitive} character. Note that if $Q_1$ is a proper divisor of $Q_2$ and $\chi_1$ is a character modulo $Q_1$, then $\chi_1$ induces exactly one character modulo $Q_2$, and so if $\Phi_{prim}(Q)$ is the number of primitive characters modulo a polynomial $Q$, then
$$
\Phi(Q) = \primesum_{D|Q} \Phi_{prim}(D),
$$
or by M\"{o}bius inversion,
$$
\Phi_{prim}(Q) = \primesum_{D|Q} \mu(D) \Phi(Q/D).
$$

A character $\chi$ is said to be \emph{even} if
$$
\chi(cf) = \chi(f)
$$
for all $c \in \bb F_q^\times.$ (This is in analogy with $\bb Z$, which has $\{-1,1\}$ as the group of units, and an even character $\chi$ is one that satisfies $\chi(-n) = \chi(n)$ for all $n$.)

The number $\Phi^{ev}(Q)$ of even Dirichlet characters modulo $Q$ is given by
$$
\Phi^{ev}(Q) = \frac{1}{q-1} \Phi(Q),
$$
which the reader may verify for himself or herself, using the fact that the abelian group $\bb F_q^\times$ has $q-1$ characters. As above, the number of primitive even characters modulo $Q$ is given by
$$
\Phi_{prim}^{ev}(Q) = \primesum_{D|Q} \mu(D) \Phi^{ev}(Q/D).
$$

The reader should have no trouble verifying that for $T^m \in \bb F_q[T]$,
\begin{equation}
\label{phi}
\Phi(T^m) = q^m(1-1/q)
\end{equation}
\begin{equation}
\label{phi_prim}
\Phi_{prim}(T^m) = q^m(1-1/q)^2
\end{equation}
\begin{equation}
\label{phi_ev}
\Phi^{ev}(T^m) = q^{m-1}
\end{equation}
\begin{equation}
\label{phi_evprim}
\Phi_{prim}^{ev}(T^m) = q^{m-1}(1-/q).
\end{equation}

\subsection{} 
The $L$-function associated with a Dirichlet character $\chi$ is defined for $|u| < 1/q$ by
\begin{align}
\label{euler}
\notag \mathcal{L}(u,\chi) &:= \primesum_f \chi(f)u^{\deg (f)} \\
& = \primeprod_P \frac{1}{1-\chi(P) u^{\deg(P)}},
\end{align}
where, again, the Euler product in the final line is over all monic irreducible polynomials. 

The trivial character $\chi_0 \MOD Q)$ is given by $\chi_0(f) = [(f,Q)=1]$. 

If $\chi$ is nontrivial, then it is standard (see \cite{Ro}) that $\mathcal{L}(u,\chi)$ is a polynomial, and may thus be analytically continued for $|u| \geq 1/q$. The content of the Riemann hypothesis for these $L$-functions, proved by Weil \cite{We}, is that all roots of $\mathcal{L}(u,\chi)$ lie on the circles $|u| = q^{-1/2}$ or $|u|=1$, the latter case being `trivial' zeros.\footnote{With language in closer correspondence to the classical Riemann hypothesis, $
L(s,\chi):=\mathcal{L}(q^{-s},\chi) = \primesum \chi(f)|f|^{-s}$ 
is zero only when $\Re s = 1/2$ or $0$.} As a well-known consequence of this, when $\chi$ is a non-trivial character modulo a polynomial $Q$ of degree $M$,
\begin{equation}
\label{RHbound}
\primesum_{\deg(f) = n} \Lambda(f)\chi(f) = O_M(q^{n/2}).
\end{equation}

\subsection{}
We can further describe the position of the roots of $\mathcal{L}(u,\chi)$ more conveniently if $\chi \MOD Q)$ is a primitive character. Specializing to this case, define
$$
\lambda_\chi := [\chi\text{ is even}].
$$
It is known that $\mathcal{L}(u,\chi)$ is a polynomial of degree $\deg Q -1$ in the variable $u$, and moreover that $\mathcal{L}(u,\chi)$ has a simple zero at $u=1$ if and only if $\chi$ is even. All of its zeros otherwise lie on the circle $|u| = q^{-1/2}$. Summarizing this information we have, for primitive characters $\chi$,
\begin{align}
\label{frob}
\notag \mathcal{L}(u,\chi) &= (1-\lambda_\chi u) \prod_{j=1}^N(1-q^{1/2} e^{i 2\pi \vartheta_j} u) \qquad \text{ for } N:= \deg Q - 1 - \lambda_\chi \\
&= (1 -\lambda_\chi u) \det(1 - q^{1/2} u \,\Theta_\chi), 
\end{align}
where $e^{i2\pi\vartheta_1}, ..., e^{i2\pi \vartheta_N}$ are constants on the unit circle which depend on the character $\chi$, and 
$$
\Theta_\chi := \textrm{diag}(e^{i2\pi\vartheta_1}, ..., e^{i2\pi \vartheta_N}).
$$
$\Theta_\chi$ is the `unitarized Frobenius matrix of $\chi$.' There is a more sophisticated realization of the operator $\Theta_\chi$ from which this name derives (see \cite{KaSa} for instance), but the above definition will be sufficient for our purposes.

By taking the logarithmic derivative of \eqref{euler} and \eqref{frob}, one obtains an explicit formula for primitive characters,
\begin{equation}
\label{explicit1}
\sum_{f\in\mathcal{M}_n} \Lambda(f)\chi(f) = -q^{n/2} \Tr (\Theta_\chi^n) - \lambda_\chi.
\end{equation}
The higher-order von Mangoldt functions are related to $\Theta_\chi$ in a similar manner, covered below.

A recent theorem of Katz (\cite{Ka}, Theorem 1.2) concerning the distribution of $\Theta_\chi$ will play a crucial role in our proof. As usual $PU(M-1)$ is the projective unitary group, the quotient of the unitary group $U(M-1)$ by unit modulus scalars, endowed with Haar measure, and we let $PU(M-1)^{\#}$ be the space of conjugacy classes of $PU(M-1)$, with inherited measure.

\begin{thm}[Katz]
\label{katz}
Fix $M\geq 3$. Over the family of even primitive characters $\chi \MOD T^{M+1})$, the conjugacy classes of the unitarized Frobenii $\Theta_\chi$ become equidistributed in $PU(M-1)^{\#}$ as $q\rightarrow\infty$.
\end{thm}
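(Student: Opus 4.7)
The plan is to apply the Katz--Sarnak paradigm, reducing equidistribution to a computation of the geometric monodromy group of an appropriate $\ell$-adic sheaf and then invoking Deligne's equidistribution theorem from \emph{Weil II}. First I would realise the set of even characters modulo $T^{M+1}$ (taken modulo the scalar characters coming from $\bb F_q^\times$) as the $\bb F_q$-points of an $M$-dimensional affine algebraic group $V_M$ over $\bb F_q$; primitivity cuts out a Zariski-open dense subset $V_M^\circ \subset V_M$ whose complement has dimension strictly less than $M$, so in the limit $q\to\infty$ one may work on $V_M$ interchangeably with $V_M^\circ$.

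Next I would assemble an $\ell$-adic local system $\mathcal F$ on $V_M$ whose geometric Frobenius at $\chi$ has characteristic polynomial equal to $\det(1 - q^{1/2} u\,\Theta_\chi)$. This is naturally given by the pushforward, to the parameter space, of an Artin--Schreier / Kummer sheaf on the affine line corresponding to the universal character parametrized by $V_M$; the stalk at $\chi$ is $H^1_c$ of the character sheaf attached to $\chi$, and by Grothendieck's trace formula this $H^1_c$ has Frobenius action agreeing with $\Theta_\chi$ (up to the normalization by $q^{1/2}$). The rank is forced to equal $M-1$ by the factorization (\ref{frob}), and purity of weight zero after the $q^{1/2}$ twist is Deligne's theorem. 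Deligne's equidistribution theorem now converts the problem into the assertion that the geometric monodromy group $G_{\mathrm{geom}}(\mathcal F)$ is the full group $PGL_{M-1}$ (whose maximal compact is $PU(M-1)$).

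The main obstacle is precisely this monodromy computation, and I expect it to occupy the bulk of the argument. The plan is threefold: (i) prove geometric irreducibility of $\mathcal F$ by a local analysis of ramification and the Swan conductor at $T=0$; (ii) exclude every non-degenerate $G_{\mathrm{geom}}$-invariant bilinear form, which would otherwise confine the monodromy to a symplectic or orthogonal subgroup; and (iii) apply a classification theorem -- Katz's Goursat--Kolchin--Ribet theorem or Larsen's alternative -- to conclude that an irreducible, non-self-dual subgroup of $GL_{M-1}$ of sufficient size must be $SL_{M-1}$ or $GL_{M-1}$, then projectivise. The genuinely delicate part is (ii): self-duality would typically reflect a special feature of the sheaf, and to rule it out one usually computes an $\varepsilon$-factor or otherwise exhibits an asymmetry between $\chi$ and $\overline{\chi}$ within the family. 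Once $G_{\mathrm{geom}}$ is identified with $PGL_{M-1}$, equidistribution in $PU(M-1)^\#$ follows from Deligne's theorem together with the density of $V_M^\circ$ in $V_M$.
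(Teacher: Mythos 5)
The paper does not prove this statement; it is quoted directly from Katz's ``Witt vectors and a question of Keating and Rudnick'' (\cite{Ka}, Theorem 1.2), so there is no internal proof against which to check you. That said, your sketch captures the correct overall architecture of Katz's argument: parametrize the family by an algebraic object over $\bb F_q$, build a pure weight-zero $\ell$-adic local system whose Frobenii are the $\Theta_\chi$, compute the geometric monodromy group, and invoke Deligne's equidistribution theorem. Two substantive gaps remain, and they are exactly where the work lies.

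First, the parametrization. You assert that the even primitive characters mod $T^{M+1}$, modulo $\widehat{\bb F_q^\times}$, ``form the $\bb F_q$-points of an $M$-dimensional affine algebraic group,'' but this is not automatic: the relevant group is the Pontryagin dual of the $p$-group of $1$-units $1+T\bb F_q[T]/(T^{M+1})$, and a character group of a finite abelian $p$-group is not on its face the $\bb F_q$-points of anything. Katz's essential insight --- it is what gives his paper its title --- is to identify the $1$-units with a quotient of the truncated big Witt vectors $W(\bb F_q)$ and then use Artin--Schreier--Witt theory to realize the dual group algebraically. Without this (or a substitute) the ``universal character sheaf'' you want to push forward doesn't yet exist as a sheaf on a variety, so Deligne's machine cannot be started. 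Second, the monodromy computation: you correctly identify it as the crux and correctly flag that self-duality is what must be excluded, but the argument is left as ``apply Goursat--Kolchin--Ribet or Larsen's alternative.'' These are indeed tools Katz reaches for, but verifying their hypotheses for this specific sheaf (irreducibility, the needed numerical moments or local ramification data, ruling out the finite and imprimitive cases, and confirming the determinant contributes only the projectivization) is the bulk of \cite{Ka} and cannot be waved through. You should at minimum note that the hypothesis $M\geq 3$ in the theorem reflects precisely the low-rank degenerations in this monodromy classification.

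So: right framework, and a fair self-assessment of where the difficulty sits, but the two load-bearing steps --- the Witt-vector parametrization of the character space and the actual determination of $G_{\mathrm{geom}}$ --- are invoked rather than supplied. As the theorem is external to this paper, the honest move here is to cite Katz and, if you want to add value, summarize the Witt-vector construction rather than re-derive it.
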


Said another way: for any continuous class function $\phi: U(M-1) \rightarrow \bb R$ such that $\phi(e^{i2\pi \theta} g) = \phi(g)$ for all unit scalars $e^{i2\pi \theta}$ and unitary matrices $g$, we have
$$
\lim_{q\rightarrow\infty} \frac{1}{\Phi_{prim}^{ev}(T^{M+1})} \sum_{\substack{\chi (T^{M+1}) \\ \text{prim., ev.}}} \phi(\Theta_\chi) = \int_{U(N-1)} \phi(g)\,dg.
$$

\subsection{} 
So far our presentation has followed that of \cite{KeRu} and the facts cited above would be enough to evaluate the variance of counts of primes. For almost-primes, we require an analogue of the explicit formula \eqref{explicit1}. To state the formula we first define, for a matrix $g$, the quantity $H_j^{(n)} = H_j^{(n)}(g)$ inductively as follows:
$$
H_1^{(n)}(g) = -\Tr(g^n),
$$
$$
H_j^{(n)} = \sum_{\substack{\ell+\lambda = n \\ \ell, \lambda \geq 1}} H_1^{(\lambda)} H_{j-1}^{(\ell)} + n H_{j-1}^{(n)}.
$$
The similarity to the inductive definition of $\Lambda_j$ should be clear. From \eqref{explicit1}, it is easy to see inductively that for primitive characters $\chi$ modulo a polynomial $Q$ of degree $M$,
\begin{equation}
\label{explicit2}
\sum_{f\in\mathcal{M}_n} \Lambda_j(f)\chi(f) = q^{n/2} H_j^{(n)}(\Theta_\chi) + O_{j,n, M}(q^{(n-1)/2}).
\end{equation}

If $\chi$ is not primitive, the above formula becomes a little more complicated to write, but at any rate inducting in the same way on \eqref{RHbound} we have for all $\chi \neq \chi_0$ modulo a polynomial $Q$ of degree $M$,
\begin{equation}
\label{RHbound2}
\sum_{f\in\mathcal{M}_n} \Lambda_j(f)\chi(f) = O_{j,n,M}(q^{n/2}).
\end{equation}

The quantities $H_j^{(n)}$ satisfy a somewhat surprising relation:
\begin{lem}
\label{H_Covar}
\begin{equation}
\label{H_covar}
\int_{U(N)} H_j^{(n)}(g) \overline{H_k^{(n)}(g)}\,dg = \sum_{d=1}^{n \wedge N} (d^j - (d-1)^j)(d^k - (d-1)^k).
\end{equation}
Additionally, if $n\neq m$,
$$
\int_{U(N)} H_j^{(n)} \overline{H_k^{(m)}}\, dg = 0.
$$
\end{lem}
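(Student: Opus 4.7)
The plan is to derive a closed-form expansion of $H_j^{(n)}(g)$ as a linear combination of hook Schur polynomials $s_{(n-b,1^b)}(g)$, after which the lemma will follow directly from Schur orthogonality on $U(N)$. The starting point is the elementary identity $\left.\frac{\partial^j}{\partial x^j}\right|_{x=0} f(ve^x) = \left(v\frac{d}{dv}\right)^j f(v)$; combined with a short induction on $j$ matching the recursion \eqref{highvm_2}, this yields the generating function
\[
\sum_{n \geq 0} H_j^{(n)}(g)\, v^n \;=\; \left.\frac{\partial^j}{\partial x^j}\right|_{x=0} \frac{\det(1-ve^x g)}{\det(1-vg)}.
\]

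Next I would expand the right-hand ratio in Schur polynomials. Writing $\det(1-ve^x g) = \sum_k(-ve^x)^k e_k(g)$ and $\det(1-vg)^{-1} = \sum_l v^l h_l(g)$ and applying the Pieri identity $e_k h_l = s_{(l+1,1^{k-1})} + s_{(l,1^k)}$ for $k,l \geq 1$ (with $e_0 h_l = s_{(l)}$ and $e_k h_0 = s_{(1^k)}$ at the endpoints), a short bookkeeping collects the Schur coefficients into the uniform form
\[
[v^n]\frac{\det(1-ve^x g)}{\det(1-vg)} \;=\; \sum_{b=0}^{n-1}(-1)^b e^{bx}(1-e^x)\, s_{(n-b,1^b)}(g).
\]
Only hook partitions of size $n$ appear, and the $x$-dependence factors cleanly as $e^{bx}(1-e^x)$. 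Differentiating $j$ times at $x=0$ and using $\partial_x^j|_0(e^{bx} - e^{(b+1)x}) = b^j - (b+1)^j$ then produces the closed form
\[
H_j^{(n)}(g) \;=\; \sum_{b=0}^{n-1}(-1)^{b+1}\bigl[(b+1)^j - b^j\bigr]\, s_{(n-b,1^b)}(g).
\]

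From here the lemma is immediate: Schur orthogonality on $U(N)$ says $\int_{U(N)} s_\lambda \overline{s_\mu}\,dg = \delta_{\lambda\mu}$ for $\ell(\lambda),\ell(\mu) \leq N$, while $s_\lambda \equiv 0$ whenever $\ell(\lambda) > N$. Since the hook $(n-b,1^b)$ has $b+1$ rows, substituting the hook expansion into $\int H_j^{(n)} \overline{H_k^{(m)}}$ forces $n = m$ and $b = b'$, with only $b \leq N-1$ surviving. Setting $d = b+1$ yields
\[
\int_{U(N)} H_j^{(n)}(g)\overline{H_k^{(m)}(g)}\,dg \;=\; \delta_{nm}\sum_{d=1}^{n\wedge N}\bigl(d^j - (d-1)^j\bigr)\bigl(d^k - (d-1)^k\bigr),
\]
which is exactly the asserted formula, together with the stated vanishing for $n \neq m$.

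The main obstacle is the Pieri bookkeeping of Step 2: one must carefully handle the boundary cases $k=0$ and $k=n$ so that the uniform coefficient $(-1)^b e^{bx}(1-e^x)$ really does appear for every $b \in \{0,\ldots,n-1\}$. Beyond this the argument is essentially automatic; in particular the saturation at $n\wedge N$ when $n > N$ needs no separate treatment, since hooks with more than $N$ rows vanish identically on $U(N)$ and no finite-$N$ ratios theorem is required.
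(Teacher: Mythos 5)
Your proof is correct and takes essentially the same route as the paper's: derive the hook Schur expansion $H_j^{(n)} = \sum_{b=0}^{n-1}(-1)^{b+1}\bigl[(b+1)^j - b^j\bigr]\,s_{(n-b,1^b)}$ from the generating function for $(-1)^jZ^{(j)}/Z$ together with the Pieri rule $e_k h_l = s_{(l+1,1^{k-1})} + s_{(l,1^k)}$, and then apply Schur orthogonality. The only difference is cosmetic: you carry the formal variable $x$ through the Pieri bookkeeping so the telescoping appears as the factor $(1-e^x)e^{bx}$ before differentiating, whereas the paper first extracts $H_j^{(r)} = \sum_\nu (-1)^\nu \nu^j e_\nu h_{r-\nu}$ by comparing $e^{-\beta r}$-coefficients and then telescopes the Pieri terms directly.
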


We give a proof of Lemma \ref{H_Covar} in the appendix. Note that it generalizes a well known form-factor evaluation noted by Dyson \cite{Dy},
\begin{equation}
\label{Dyson}
\int_{U(N)} |\Tr(g^n)|^2\, dg = n \wedge N.
\end{equation}
Here $n \wedge N$ is the minimum of $n$ and $N$.

\section{A proof of Theorem \ref{Covar_fin}}

\subsection{}
We have already noted that, averaged over all monic polynomials $f$ of degree $n$, $\Lambda_j(f)$ has expected value $n^j - (n-1)^j$. This is a combinatorial consequence of the prime number theorem. We now turn to formula \eqref{Covar_1_fin}, and obtain as an easy corollary \eqref{Covar_2_fin}. These results too are combinatorial consequences of the prime number theorem, and in fact can be directly verified from \eqref{PNT_fin} without much effort for small $j$ and $k$. For $j$ and $k$ in general it is natural to make use of a generating series.

\begin{proof}[Proof of \eqref{Covar_1_fin} and \eqref{Covar_2_fin} in Theorem \ref{Covar_fin}]
For $|x| < 1/q$ we define
\begin{equation}
\label{Xi_1}
\Xi(s_1,s_2; x):= 1 + \sum_{j,k \geq 1} \frac{s_1^j}{j!} \frac{s_2^k}{k!} \primesum_f \Lambda_j(f) \Lambda_k(f) x^{\deg(f)} 
\end{equation}
Using \eqref{Lambda_j}, and letting $A = e^{s_1}, B = e^{s_2}$ with $s_1, s_2 \leq 0$, the right hand side may be simplified
$$
\primesum_f x^{\deg(f)} \primesum_{d|f} \mu(d) A^{\deg(f/d)} \primesum_{\delta|f} \mu(\delta) B^{\deg(f/\delta)}.
$$
As the summand is multiplicative in $f$, this is equal to
\begin{align*}
\primeprod_P \frac{1-x^{\deg(P)} A^{\deg(P)} - x^{\deg(P)} B^{\deg(P)} + x^{\deg(P)}}{1 - x^{\deg(P)}A^{\deg(P)}B^{\deg(P)}}.
\end{align*}
We note that,
$$
\frac{1-zu-zv+z}{1-zuv} = \frac{(1-zu)(1-zv)}{(1-zuv)(1-z)}\big(1+O(z^2)\big),
$$
for fixed $u$ and $v$. Using this for $z = x^{\deg(P)}, u = A^{\deg(P)}, v = B^{\deg(P)}$, we see that from the definition of $\mathcal{Z}(x)$ in \eqref{zeta_def} that
\begin{align*}
\Xi(s_1,s_2; x) &= \frac{\mathcal{Z}(x) \mathcal{Z}(xAB)}{\mathcal{Z}(xA)\mathcal{Z}(xB)} \primeprod_P\big(1+ O(x^{2 \deg(P)})\big) \\
& = \frac{(1-qxA)(1-qxB)}{(1-qx)(1-qxAB)}\big(1+O(q x^2)\big),
\end{align*}
so that
\begin{align}
\label{Xi_2}
\notag \lim_{q\rightarrow\infty} \Xi(s_1, s_2; x/q) &= \frac{(1-xA)(1-xB)}{(1-x)(1-xAB)} \\
&= 1 + \sum_{n=1}^\infty x^n \sum_{d\leq n}(A^d - A^{d-1})(B^d - B^{d-1}) \\
\notag &= 1+ \sum_{j,k\geq 1} \frac{s_1^j}{j!} \frac{s_2^k}{k!} \sum_{n=1}^\infty x^n \bigg(\sum_{d\leq n} (d^j - (d-1)^j)(d^k - (d-1)^k)\bigg).
\end{align}
From \eqref{vm_bound}, $\Lambda_j(f)\Lambda_k(f) \leq \deg^{j+k}(f),$ so that uniformly in $q$,
$$
\frac{1}{q^n} \sum_{f\in\mathcal{M}_n} \Lambda_j(f) \Lambda_k(f) \leq n^{j+k}.
$$
Returning to the definition \eqref{Xi_1}, a standard exercise in contour integration, making use of dominated convergence to pass to the limit \eqref{Xi_2}, implies that
$$
\lim_{q\rightarrow\infty}\frac{1}{q^n} \sum_{f\in\mathcal{M}_n} \Lambda_j(f) \Lambda_k(f) = \sum_{d\leq n} (d^j - (d-1)^j)(d^k - (d-1)^k),
$$
and this is \eqref{Covar_1_fin}.

From \eqref{vm_average} we derive
\begin{align*}
&\frac{1}{q^n} \sum_{f\in\mathcal{M}_n} \widetilde{\Lambda}_j(f) \widetilde{\Lambda}_k(f) \\
&= \frac{1}{q^n} \sum_{f\in\mathcal{M}_n} \Lambda_j(f) \Lambda_k(f) - (n^j-(n-1)^j)(n^k-(n-1)^k),
\end{align*}
and so \eqref{Covar_1_fin} immediately yields \eqref{Covar_2_fin}.
\end{proof}

\subsection{}
Any proof of \eqref{Covar_3_fin} must lie deeper. The main idea of the proof is to use Dirichlet characters modulo $T^{n-h}$ to localize sums over almost-primes. This entails a restriction to residues coprime to $T^{n-h}$, and it is for this reason that in the proof below we will frequently consider sums over polynomials $f$ such that $f(0)\neq 0$.

\begin{proof}[Proof of \eqref{Covar_3_fin} in Theorem \ref{Covar_fin}]
We outline the main steps of our proof, with details to follow.

\vspace{2.5mm}
\emph{Step 1:} We define
$$
E_j^\natural(n):= \frac{1}{|\mathcal{M}_n^\natural|} \primesum_{f \in \mathcal{M}_n^\natural} \Lambda_j(f) = \frac{1}{|\mathcal{P}_n^\natural|}\sum_{f\in\mathcal{P}_n^\natural} \Lambda_j(f),
$$
where for notational reasons we will write
$$
\mathcal{M}_n^\natural := \{f \in \mathcal{M}_n,\; f(0)\neq 0\}
$$
$$
\mathcal{P}_n^\natural := \{f:\; \deg(f) = n,\; f(0)\neq 0\},
$$
and
$$
\widetilde{\Lambda}_j^\natural(f):= \Lambda_j(f) - E_j^\natural(n),\quad \textrm{for $n := \deg(f)$},
$$
and
$$
\widetilde{\Psi}_j^\natural(f;\,h) := \sum_{\substack{g \in \mathcal{M}_n^\natural \\ g\in I(f;h)}} \widetilde{\Lambda}_j^\natural(f).
$$
(We will show later that $\widetilde{\Psi}_j \approx \widetilde{\Psi}_j^\natural$.)

Making use of an involution trick from \cite{KeRu}, we will see that
\begin{align}
\label{stepA}
&\sum_{f\in\mathcal{M}_n} \widetilde{\Psi}_j^\natural(f;\,h)\widetilde{\Psi}_j^\natural(f;\,h) \\
\notag &= \frac{q^{h+1}}{q-1} \sum_{g_1,g_2 \in \mathcal{P}_n^\natural} \big[ g_1 \equiv q_2\; (T^{n-h})\big]\, \widetilde{\Lambda}_j^\natural(g_1) \widetilde{\Lambda}_k^\natural (g_2).
\end{align}

\vspace{2.5mm}
\emph{Step 2:} On the other hand, using the orthogonality relation \eqref{Dirch_orth}, we will show that \eqref{stepA} is equal to 
\begin{equation}
\label{stepB}
q^{h+1}(q-1) \frac{1}{\Phi(T^{n-h})}\sum_{\substack{\chi \neq \chi_0\, (T^{n-h}) \\ \textrm{even}}} \bigg(\sum_{g_1\in\mathcal{M}_n} \Lambda_j(g_1) \chi(g_1)\bigg) \overline{\bigg(\sum_{g_2\in\mathcal{M}_n} \Lambda_k(g_2)\chi(g_2)\bigg)}.
\end{equation}

\vspace{2.5mm}
\emph{Step 3:} In turn, from \eqref{explicit2} and \eqref{RHbound2}, the expression \eqref{stepB} simplifies to
\begin{align}
\label{stepC}
&q^{h+1}(q-1)\frac{1}{q^{n-h}(1-q^{-1})}\sum_{\substack{\chi\,(T^{n-h}) \\ \textrm{ev., prim.}}} q^n H_j^{(n)}(\Theta_\chi) \overline{H_k^{(n)}(\Theta_\chi)} \\
&\notag + O_{n,h}\big(q^{n+h+1/2}\big).
\end{align}

\vspace{2.5mm}
\emph{Step 4:} From \eqref{phi_evprim} we see $(q-1)/q^{n-h}(1-q^{-1}) \sim \Phi_{prim}^{ev}(T^{n-h})$. The equidistribution Theorem \ref{katz} and the evaluation of Lemma \ref{H_Covar} thus show that \eqref{stepC} is asymptotic to
\begin{equation}
\label{stepD}
q^{h+1} q^n \sum_{d=1}^{n-h-2} (d^j-(d-1)^j)(d^k-(d-1)^k).
\end{equation}
so long as $n-h \geq 4$.

\vspace{2.5mm}
\emph{Step 5:} Finally, we show that $\widetilde{\Psi}^\natural$ is sufficiently close to $\widetilde{\Psi}$ that \eqref{stepD} implies \eqref{Covar_3_fin} in Theorem \ref{Covar_fin}.

\vspace{2.5mm}
We turn to the details.

\textbf{\textit{Step 1:}} The involution of Keating and Rudnick we will make use of is the mapping $f \mapsto f^\ast$ from the space $\mathcal{P}_n^\natural$ onto itself defined by
$$
(a_0 + a_1 T + \cdots a_n T^n)^\ast = a_n + a_{n-1} T + \cdots + a_0 T^n.
$$
Clearly if $f$ does not vanish at $0$,
$$
(f^\ast)^\ast = f,
$$ 
and for $f$ and $g$ non-vanishing at $0$,
$$
(fg)^{\ast} = f^\ast g^\ast.
$$
It follows that for $f\in \mathcal{P}_n^\natural$,
$$
\deg(f) = \deg(f^\ast),
$$
$$
\Lambda(f) = \Lambda(f^\ast),
$$
$$
\Lambda_j(f) = \Lambda_j(f^\ast),
$$
$$
\mu(f) = \mu(f^\ast),
$$
and so on.

This involution has the property that for $f_1, f_2 \in \mathcal{P}_n^\natural$, 
$$
\deg(f_1-f_2) \leq h
$$ 
if and only if 
$$
f_1^\ast - f_2^\ast \equiv 0 \MOD T^{n-h}),
$$ 
evident by comparing coefficients.

Hence
\begin{align*}
&\sum_{f\in \mathcal{M}_n} \widetilde{\Psi}_j^\natural(f;\,h)\widetilde{\Psi}_k^\natural(f;\,h) \\
&= \sum_{f\in\mathcal{M}_n} \sum_{f_1, f_2 \in \mathcal{M}_n^\natural} \big[\deg(f-f_1), \deg(f-f_2) \leq h\big] \,\widetilde{\Lambda}_j^\natural(f_1) \widetilde{\Lambda}_k^\natural(f_2) \\
&= q^{h+1} \sum_{f_1, f_2 \in \mathcal{M}_n^\natural} \big[\deg(f_1-f_2) \leq h\big] \widetilde{\Lambda}_j^\natural(f_1) \widetilde{\Lambda}_k^\natural(f_2) \\
&= \frac{q^{h+1}}{q-1} \sum_{f_1, f_2 \in \mathcal{P}_n^\natural}\big[\deg(f_1-f_2) \leq h\big] \widetilde{\Lambda}_j^\natural(f_1) \widetilde{\Lambda}_k^\natural(f_2) \\
&= \frac{q^{h+1}}{q-1} \sum_{g_1, g_2 \in \mathcal{P}_n^\natural} \big[ g_1 \equiv g_2\, (T^{n-h})\big]\, \widetilde{\Lambda}_j^\natural(g_1) \widetilde{\Lambda}_k^\natural(g_2),
\end{align*}
by reindexing with $g_1 = f_1^\ast$ and $g_2 = f_2^\ast$. This is \eqref{stepA}.

\textbf{\textit{Step 2:}} We note that for any $g \in \mathcal{P}_n^\natural$, the number of solutions $f \in \mathcal{P}_n^\natural$ to $f \equiv g\, (T^{n-h})$ is $q^{h+1}$. Writing $\widetilde{\Lambda}_j^\natural(f)$ as $\Lambda_j(f) - E_j^\natural(n)$ and expanding, we see that
\begin{align}
\label{covar_reduc}
\notag &\sum_{g_1, g_2 \in \mathcal{P}_n^\natural} \big[ g_1 \equiv g_2\, (T^{n-h})\big]\, \widetilde{\Lambda}_j^\natural(g_1) \widetilde{\Lambda}_k^\natural(g_2)\\
\notag &= \bigg(\sum_{g_1, g_2 \in \mathcal{P}_n^\natural} \big[ g_1 \equiv g_2\, (T^{n-h})\big]\, \Lambda_j(g_1) \Lambda_k(g_2)\bigg)
- q^{h+1}\, |\mathcal{P}_n^\natural| \,E_j(n) E_k(n) \\
&= I - II,
\end{align}
say.

Now,
$$
I = \frac{1}{\Phi(T^{n-h})}\sum_{\chi\, (T^{n-h})} \bigg(\sum_{\deg(g_1) = n} \Lambda_j(g_1) \chi(g_1)\bigg) \overline{\bigg(\sum_{\deg(g_2) = n} \Lambda_k(g_2) \chi(g_2)\bigg)}.
$$ 

This sum may be simplified with a result made use of in \cite{KeRu} as well, that
\begin{equation}
\label{charsum}
\sum_{c\in \bb F_q^\times} \chi(c) = (q-1)\,\big[\chi\textrm{ is even.}\big].
\end{equation}
\eqref{charsum} follows from the fact that
$$
\sum_{c\in \bb F_q^\times} \chi(c) = \sum_{c \in \bb F_q^\times} \chi(bc) = \chi(b)\sum_{c \in \bb F_q^\times} \chi(c),
$$
for any $b \in \bb F_q^\times$. Unless $\chi(b)=1$ for all $b\in \bb F_q^\times$, this implies the original sum is null.

Applying \eqref{charsum}, we see,
$$
I = \frac{(q-1)^2}{\Phi(T^{n-h})} \sum_{\substack{\chi\, (T^{n-h}) \\ \textrm{even}}} \bigg(\sum_{g_1\in\mathcal{M}_n} \Lambda_j(g_1) \chi(g_1)\bigg) \overline{\bigg(\sum_{g_2\in\mathcal{M}_n} \Lambda_k(g_2) \chi(g_2)\bigg)}.
$$
(Note that, having reduced the sum above over all characters to a sum of only even characters, we were able to further reduce the sums of all polynomials to sums of only monic polynomials.)

On the other hand, $|\mathcal{P}_n^\natural| = q^n(q-1)$ and $|\mathcal{M}_n^\natural| = q^{n-1}(q-1)$, so we have
$$
II = \frac{(q-1) q^{h+1}}{q^n} \sum_{g_1 \in \mathcal{M}_n} \Lambda_j(g_1) \chi_0(g_1) \overline{\primesum_{g_2\in \mathcal{M}_n} \Lambda_k(g_2) \chi_0(g_2)},
$$
where $\chi_0$ is the trivial character modulo $T^{n-h}$.

Because $\Phi(T^{n-h}) = q^{n-h}(1-q^{-1})$, this shows that \eqref{stepA} of Step 1 is equal to \eqref{stepB} of this Step 2.

It is worthwhile to reflect on this identity; its simplicity is obscured a little by these computations. The equality of \eqref{stepA} and \eqref{stepB} should not come as a complete surprise, since in both cases we are summing only the `oscillating part' of a series.

\textbf{\textit{Step 3:}} From equations \eqref{phi_ev} and \eqref{phi_evprim}, the number of non-primitive even characters modulo $T^{n-h}$ is $q^{n-h-1}$, and each inner sum of \eqref{stepB} over a non-primitive character we may bound with the Riemann hypothesis \eqref{RHbound2}. These terms collectively contribute an error term $O_{n,h}(q^{n+h})$ in \eqref{stepC}.

On the other hand, each inner sum in \eqref{stepB} over a primitive character may be written in terms of the Frobenii by \eqref{explicit2}, and this immediately yields \eqref{stepC}.

\textbf{\textit{Step 4:}} We need only check that for unitary matrices $g$, $\phi(g):=H_j^{(n)}(g)\overline{H_k^{(n)}(g)}$ is a class function satisfying $\phi(e^{i2\pi \theta} g) = \phi(g)$. This is straightforward.

\textbf{\textit{Step 5:}} We introduce the mapping of polynomials $f \mapsto f^{[i]}$ defined by
$$
(a_0 + a_1 T^1 + \cdots a_n T^n)^{[i]} := a_i + a_{i+1} T + \cdots a_n T^{n-i},
$$
so that if $T^i | f$,
$$
f = T^i f^{[i]}.
$$
We will make heavy use of the expansion \eqref{Lambda_fac} of $\Lambda_j(fg)$ to demonstrate
\begin{align}
\label{Psi_fac}
\widetilde{\Psi}_j(f;\,h) =& \;\widetilde{\Psi}_j^\natural(f;\,h) \\
\notag &+ \sum_{i=1}^h \sum_{\ell=0}^j {j \choose \ell} \,\Lambda_\ell(T^i)\, \widetilde{\Psi}_{j-\ell}^\natural(f^{[i]};\, h-i) + O_{j,n,h}(1).
\end{align}
All terms on the right hand side but the first will end up being negligible.

We begin by noting
$$
\widetilde{\Psi}_j(f;\,h) = \Psi_j(f;\,h) - \frac{q^{h+1}}{q^n} \sum_{f\in\mathcal{M}_n} \Lambda_j(g).
$$
Plainly,
\begin{align}
\label{red_res_A}
\notag \Psi_j(f;\,h) =& \sum_{\substack{g\in \mathcal{M}_n^\natural \\ \deg(g-f) \leq h}} \Lambda_j(g) + \sum_{\substack{g \in \mathcal{M}_{n-1}^\natural \\ \deg(Tg-f) \leq h}} \Lambda_j(Tg) + \cdots \\
&+ \sum_{\substack{g \in \mathcal{M}_{n-h}^\natural \\ \deg(T^h g -f) \leq h}} \Lambda_j(T^h g) + \underbrace{\Lambda_j(T^{h+1} f^{[h+1]})}_{= O_{j,n,h}(1)}.
\end{align}

On the other hand, it is easy to verify that for $0 \leq r \leq h \leq n = \deg(f)$,
\begin{equation*}
\frac{1}{|\mathcal{M}_{n-r}^\natural|} \sum_{\substack{g \in \mathcal{M}_{n-r}^\natural \\ \deg(T^r g -f) \leq h}} 1 = \frac{q^{h+1}}{q^n}.
\end{equation*}
Therefore
\begin{align}
\label{red_res_B}
\notag \frac{q^{h+1}}{q^n} \sum_{g\in\mathcal{M}_n} \Lambda_j(g) =& \frac{q^{h+1}}{q^n} \sum_{g \in \mathcal{M}_n^\natural} \Lambda_j(g) + \cdots \\
\notag &+ \frac{q^{h+1}}{q^n}\sum_{g \in \mathcal{M}_{n-h}^\natural} \Lambda_j(T^h g) + O_{j,n,h}(1) \\
=&  \sum_{\substack{g\in \mathcal{M}_n^\natural \\ \deg(g-f) \leq h}} \frac{1}{|\mathcal{M}_n^\natural|} \sum_{v \in \mathcal{M}_n^\natural} \Lambda_j(v) + \cdots \\
\notag &+ \sum_{\substack{g \in \mathcal{M}_{n-h}^\natural \\ \deg(T^h g -f) \leq h}}\frac{1}{|\mathcal{M}_{n-h}^\natural|} \sum_{v \in \mathcal{M}_{n-h}^\natural} \Lambda_j(T^h v) + O_{j,n,h}(1).
\end{align}
By definition,
$$
\frac{1}{|\mathcal{M}_n^\natural|} \sum_{v \in \mathcal{M}_n^\natural} \Lambda_j(v) = E_n^\natural,
$$
and by using \eqref{Lambda_fac} to expand $\Lambda_j(T^i v)$, we have for $v(0) \neq 0$
$$
\frac{1}{|\mathcal{M}_{n-i}^\natural|} \sum_{v \in \mathcal{M}_{n-i}^\natural} \Lambda_j(T^i v) = \sum_{\ell=0}^j {j \choose \ell} \Lambda_\ell(T^i) E_{j-\ell}^\natural(n-i).
$$

A similar expansion can be performed in \eqref{red_res_A}. We subtract \eqref{red_res_B} from \eqref{red_res_A} and note that $\deg(T^i g-f) \leq h$ if and only if $\deg(g - f^{[i]}) \leq h-i$. This gives us the identity \eqref{Psi_fac} that we were after.

We have shown in steps 1 through 4 that for fixed $n,h$ and $j,k$
$$
\sum_{f\in\mathcal{M}_n} \widetilde{\Psi}_j^\natural(f;\,h) \widetilde{\Psi}_k^\natural(f;\,h) \sim q^{h+1} q^n \sum_{d=1}^{n-h-2} (d^j-(d-1)^j)(d^k-(d-1)^k).
$$
This implies in particular that for $0 \leq i \leq h$,
\begin{align*}
\sum_{f\in\mathcal{M}_n} \widetilde{\Psi}_j^\natural(f^{[i]};\, h-i)^2 &= q^i \sum_{f\in\mathcal{M}_{n-i}} \widetilde{\Psi}_j^\natural(f;\,h-i)^2 \\
&= O_{n,h,j}(q^{h+1} q^n / q^i),
\end{align*}
and so by Cauchy-Schwarz for $0 \leq i_1, i_2 \leq h$,
$$
\sum_{f\in\mathcal{M}_n} \widetilde{\Psi}_j^\natural(f^{[i_1]};\,h-i_1) \widetilde{\Psi}_k^\natural(f^{[i_2]};\,h-i_2) = O_{n,h,j,k}(q^{h+1} q^n / q^{(i_1+i_2)/2}).
$$
This is $O(q^{h+1} q^n / q^{1/2})$ as long as one of $i_1$ or $i_2$ are non-zero. Hence, using the expansion \eqref{Psi_fac}, for fixed $n,h$ and $j,k$ we have
\begin{align*}
&\frac{1}{q^{h+1}}\frac{1}{q^n} \sum_{f\in\mathcal{M}_n} \widetilde{\Psi}_j(f;\,h) \widetilde{\Psi}_k(f;\,h) \\
&= \frac{1}{q^{h+1}}\frac{1}{q^n} \sum_{f\in\mathcal{M}_n} \widetilde{\Psi}_j^\natural(f;\,h) \widetilde{\Psi}_k^\natural(f;\,h) + O(1/\sqrt{q}) \\
& \sim \sum_{d=1}^{n-h-2} (d^j-(d-1)^j)(d^k-(d-1)^k),
\end{align*}
as claimed.
\end{proof}

\textbf{Acknowledgements:}
I thank Paul-Olivier Dehaye, Steve Gonek, Emmanuel Kowalski, Hugh Montgomery, Igor Pak, Terence Tao, and especially Jon Keating and Zeev Rudnick, for helpful discussions and comments. It was a suggestion of Rudnick's to apply the results of my thesis to a function field setting, where they might be proved unconditionally. I also thank the anonymous referee for a careful reading of the manuscript, with several helpful suggestions and corrections. The author was partially supported by an AMS-Simons Travel Grant

\begin{appendix}
\section{Some random matrix statistics}

\subsection{} 
In this appendix we give a proof of Lemma \ref{H_Covar}, evaluating the averages of the statistics $H_j^{(n)} \overline{H_k^{(m)}}$. We do this by decomposing $H_j^{(n)}$ into a linear combination of Schur functions, which play a fundamental role in the representation theory of the unitary group. This proof is adapted from material that first appeared in the author's thesis.

We recall without proof some essential facts from symmetric function theory. A more complete introduction with proofs of the facts cited below is found in \cite{Bu}. The references \cite{St} and \cite{Ga} are also useful, the latter being a streamlined introduction from the perspective of random matrices and analytic number theory.

In the variables $\omega_1,...,\omega_N$, recall the definitions that for $k=0,1...,N$
$$
e_k = e_k(\omega_1,...,\omega_N):= \sum_{j_1 < \cdots < j_k} \omega_{j_1}\cdots\omega_{j_k}
$$
with $e_o:=1$, while for $k=0,1,...$,
$$
h_k = h_k(\omega_1,...,\omega_N):= \sum_{j_1\leq \cdots \leq h_k} \omega_{j_1}\cdots\omega_{j_k}.
$$
with $h_0:= 1$.

A partition $\lambda = (\lambda_1,\lambda_2,...)$ is a sequence of non-negative integers $\lambda_1\geq \lambda_2 \geq ...$ such that for large enough $n$, $\lambda_{n+1}=0$. $\lambda$ may then be thought of as just $(\lambda_1,...,\lambda_n)$, and the largest $n$ such that $\lambda_n \neq 0$ is called the \textit{length} of $\lambda$.

If the length of $\lambda$ is no more than $N$, we define the Schur function $s_\lambda$ by
$$
s_\lambda= s_\lambda(\omega_1,...,\omega_N):= \frac{\det_{N\times N}\big( \omega_i^{\lambda_j+N-j}\big)}{\det_{N\times N}\big(\omega_i^{N-j}\big)}.
$$
The functions also satisfy
$$
s_\lambda = \sum_T  (\omega_{t[1,1]}\omega_{t[1,2]}\cdots\omega_{t[1,\lambda_1]}) \cdots (\omega_{t[2,1]}\cdots\omega_{t[2,\lambda_2]})(\omega_{t[n,1]}\cdots \omega_{t[n,\lambda_n]}),
$$
where $n$ is the length of $\lambda$ and the sum is over all so-called \textit{semi-standard Young tableau} of shape $\lambda$, numbers
\[
\begin{array}{lllll}
t[1,1] & t[1,2] & \dots & \dots & t[1,\lambda_1] \\
t[2,1] & t[2,2] & \dots & t[2,\lambda_2] & \\
\vdots & \vdots & \ddots & &  \\
t[n,\lambda_n] & \dots & t[n,\lambda_n] & & \\
\end{array}
\]
with $t[i,j] \in \{1,2,...,N\}$ for all $i,j$, so that in rows numbers from left-to-right are non-decreasing:
$$
t[i,1] \leq t[i,2] \leq \cdots \leq t[i,\lambda_i],
$$
while in columns
\begin{align*}
&\hspace{2mm} t[1,j] \\
&< t[2,j] \\
&\hspace{4mm}\vdots \\
&< t[\cdot,j]
\end{align*}
numbers are strictly increasing. For instance, when $N=3$, the semi-standard Young tableaux of the partition $(2,1)$ are

\vspace{4mm}

\ytableausetup{mathmode, boxsize=1.1em}
\ytableaushort{11,2}
\hspace{3mm}
\ytableausetup{nobaseline}
\ytableaushort{12,2}
\hspace{3mm}
\ytableausetup{nobaseline}
\ytableaushort{13,2}
\hspace{3mm}
\ytableausetup{nobaseline}
\ytableaushort{11,3}
\hspace{3mm}
\ytableausetup{nobaseline}
\ytableaushort{12,3}
\hspace{3mm}
\ytableausetup{nobaseline}
\ytableaushort{13,3}
\hspace{3mm}
\ytableausetup{nobaseline}
\ytableaushort{22,3}
\hspace{3mm}
\ytableausetup{nobaseline}
\ytableaushort{23,3}

\vspace{4mm}

\noindent and
$$
s_{(2,1)}(\omega_1,\omega_2, \omega_3) = \omega_1^2 \omega_2 + \omega_1 \omega_2^2 + \omega_1^2 \omega_3 + \omega_1 \omega_3^2 + \omega_2^2 \omega_3 + \omega_2 \omega_3^2 + 2\omega_1 \omega_2 \omega_3.
$$

For us the importance of Schur functions is that
\begin{equation}
\label{orthoschur}
\int_{U(N)}s_{\lambda_1}(\omega_1,...,\omega_N)\overline{s_{\lambda_2}(\omega_1,...,\omega_N)}\,dg = \delta_{\lambda_1 = \lambda_2},
\end{equation}
for all partitions $\lambda_1,\lambda_2$ of length no more than $N$, where $\omega_1,...,\omega_N$ are the eigenvalues of $g\in U(N)$; a proof of this fact can be found in \cite{Bu} or \cite{Ga}.

Finally, let us introduce the abbreviation
$$
\lambda = (\lambda_1,...,\lambda_j,\underbrace{1,...,1}_{k}) = (\lambda_1,...,\lambda_j, 1^k).
$$
This generalizes in the obvious way, but the above usage is all that we will make use of.

It will be convenient to have defined the characteristic polynomial of $g\in U(N)$, with eigenvalues $\omega_1,...,\omega_N$,
$$
Z(\beta):= \det(1-e^{-\beta}g) = \prod_{j=1}^N (1-e^{-\beta}\omega_j).
$$
By logarithmic differentiation, we have
$$
\frac{Z'}{Z}(\beta) = \sum_{n=1}^\infty e^{-\beta n} \Tr(g^n),
$$
and because
$$
(-1)^j \frac{Z^{(j)}}{Z} = \Big(-\frac{Z'}{Z} - \frac{d}{d\beta}\Big) \Big( (-1)^{j-1} \frac{Z^{(j-1)}}{Z}\Big),
$$
it follows inductively that
\begin{equation}
\label{highlogderivZ}
(-1)^j\frac{Z^{(j)}}{Z}(\beta) = \sum_{r=1}^\infty e^{-\beta r} H_j^{(r)}.
\end{equation}
(This should not be surprising, in light of the well-known identity
$$
(-1)^j \frac{\zeta^{(j)}}{\zeta}(s) = \sum_{n=1}^\infty \frac{\Lambda_j(n)}{n^s},
$$
and its function field analogues.)

It is easy to check that,
\begin{equation}
\label{Zto_e}
Z(\beta) = \sum_{n=0}^N (-1)^n e_n e^{-\beta n}
\end{equation}
\begin{equation}
\label{Zjto_e}
Z^{(j)}(\beta) = (-1)^j \sum_{n=0}^N (-1)^n n^j e_n e^{-\beta n}
\end{equation}
\begin{equation}
\label{Zto_h}
\frac{1}{Z(\beta)} = \sum_{m=0}^\infty h_m e^{-\beta m}.
\end{equation}

\begin{proof}[Proof of Lemma \ref{H_Covar}]
All symmetric functions in this section are in the variables $\omega_1,...,\omega_N$, eigenvalues of a unitary matrix. We show that
\begin{equation}
\label{HjtoSchur}
H_j^{(r)} = \sum_{\nu=1}^{r \wedge N} (-1)^\nu (\nu^j - (\nu-1)^j) s_{(r-\nu+1,1^{\nu-1})}.
\end{equation}
Lemma \ref{H_Covar} then follows from the Schur orthogonality relation \eqref{orthoschur}.

In the first place, from \eqref{highlogderivZ} and \eqref{Zjto_e} and \eqref{Zto_h}, we have by pairing coefficients,
\begin{equation}
\label{Hto_eh}
H_j^{(r)} = \sum_{\nu=1}^{r\wedge N} (-1)^\nu \nu^j e_\nu h_{r-\nu}.
\end{equation}
But note that for $n\geq 1$, $m \geq 0$,
$$
e_n h_m = \sum_{\alpha_1 < \cdots < \alpha_n} \sum_{\beta_1 \leq \cdots \leq \beta_m} \omega_{\alpha_1}\cdots\omega_{\alpha_n}\omega_{\beta_1}\cdots \omega_{\beta_m},
$$
where if $m=0$ the sum over $\beta$ is understood to be empty.

In the case that $m\neq 0$, breaking the sum into two parts depending on whether $\alpha_1 \leq \beta_1$ or $\beta_1 < \alpha_1$, this sum is
\begin{align*}
e_n h_m =& \sum_{\alpha_1\leq \beta_1 \cdots \leq \beta_m} \omega_{\alpha_1}\omega_{\beta_1}\cdots \omega_{\beta_n} \sum_{\substack{\alpha_2, ..., \alpha_n \\ \textrm{ such that } \\ \alpha_1 < \alpha_2 < \cdots < \alpha_n}} \omega_{\alpha_2}\cdots\omega_{\alpha_n} \\
&+ \sum_{\beta_1 \leq \cdots \leq \beta_m} \omega_{\beta_1}\cdots\omega_{\beta_m} \sum_{\substack{\alpha_1,...,\alpha_n \\ \textrm{ such that } \\ \beta_1 < \alpha_1 < \cdots < \alpha_n}} \omega_{\alpha_1}\cdots\omega_{\alpha_n} \\
=& s_{(m+1,1^{n-1})} + s_{(m,1^n)}
\end{align*}
Provided we adopt the convention that $s_{(0,1^n)} = 0$, this remains true when $m=0$.

On the other hand, if $n=N$,
\begin{align*}
e_n h_m &= \omega_1\cdots\omega_N \sum_{\beta_1 \leq \cdots \leq \beta_m} \omega_{\beta_1}\cdots\omega_{\beta_m} \\
&= s_{(m+1,1^{N-1})}
\end{align*}
since in this case, for any indices $\beta_1,...,\beta_m$ of the sum, $1\leq \beta_1$.

\vspace{3mm}
\noindent\textit{Remark:} The above identities are a special case of the well known Pieri rule.
\vspace{3mm}

Therefore, for all $j \geq 1$ and $r \geq 1$,
\begin{align*}
H_j^{(r)} & = \sum_{\nu=1}^{r \wedge N} (-1)^\nu \nu^j \big(s_{(r-\nu+1,1^{\nu-1})} + \delta_{\nu \neq r,N} s_{(r-\nu,1^\nu)}\big) \\
&= \sum_{\nu=1}^{r \wedge N} (-1)^\nu (\nu^j - (\nu-1)^j) s_{(r-\nu+1,1^{\nu-1})},
\end{align*}
as claimed. Applying \eqref{orthoschur} to this proves the lemma.
\end{proof}

\subsection{} 
We have mentioned that the algebraic simplicity of Theorem \ref{Covar_fin} and Conjecture \ref{APrime_covar} is closely connected to a ratio conjecture of Farmer for autocorrelations of the zeta function. We elaborate on this point by showing that Lemma \ref{H_Covar} is equivalent to the $2\times 2$ ratio theorem for the unitary group. Other earlier proofs of this result, and its generalization to $n\times m$ ratios, may be found in \cite{BuGa}, \cite{CoFaZi}, and \cite{CoFoSn}. The first of these makes use of a Schur function decomposition also, though the authors arrange their combinatorics differently.

\begin{thm}
\label{ratiotheorem}
For $A, B, C, D$ complex numbers with $|C|, |D| < 1$, for $N \geq 1$,
\begin{align}
\label{ratiothm}
&\int_{U(N)}\frac{\det(1-Au)\det(1-Bu^{-1})}{\det(1-Cu)\det(1-Du^{-1})}\,du \\
\notag &= \frac{(1-BC)(1-AD)}{(1-AB)(1-CD)} + (AB)^N \frac{(1-CA^{-1})(1-DB^{-1})}{(1-(AB)^{-1})(1-CD)}.
\end{align}
\end{thm}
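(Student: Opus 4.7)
The plan is to expand both ratios in the integrand as generating series of Schur functions and then apply the orthogonality relation \eqref{orthoschur}. First, using \eqref{Zto_e} and \eqref{Zto_h} with $e^{-\beta}=A$ and $e^{-\beta}=C$ respectively, one has $\det(1 - Au) = \sum_{n=0}^N (-A)^n e_n$ and $\det(1 - Cu)^{-1} = \sum_{m \geq 0} C^m h_m$, so
$$
\frac{\det(1-Au)}{\det(1-Cu)} = \sum_{n=0}^N \sum_{m \geq 0} (-A)^n C^m \, e_n h_m.
$$
I would next invoke the Pieri-type identities $e_n h_m = s_{(m+1, 1^{n-1})} + s_{(m, 1^n)}$ for $1 \leq n < N$ (with the convention $s_{(0, 1^n)} = 0$) and $e_N h_m = s_{(m+1, 1^{N-1})}$, both established in the proof of Lemma \ref{H_Covar}, and reorganize the series by Schur index. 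A short check shows that for each hook $(a, 1^b)$ with $a \geq 1$ and $0 \leq b \leq N-1$, only the pairs $(n, m) = (b, a)$ and $(n, m) = (b+1, a-1)$ contribute, and their weights combine to $(-A)^b C^a + (-A)^{b+1} C^{a-1} = (-A)^b C^{a-1}(C - A)$. Consequently,
$$
\frac{\det(1-Au)}{\det(1-Cu)} = 1 + (C - A) \sum_{a \geq 1} \sum_{b=0}^{N-1} (-A)^b C^{a-1} s_{(a, 1^b)}.
$$

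The same argument, after noting that $\omega_j^{-1} = \overline{\omega_j}$ for unitary $u$, yields
$$
\frac{\det(1-Bu^{-1})}{\det(1-Du^{-1})} = 1 + (D - B) \sum_{a' \geq 1} \sum_{b'=0}^{N-1} (-B)^{b'} D^{a'-1} \overline{s_{(a', 1^{b'})}}.
$$
Multiplying the two expansions and integrating against Haar measure, Schur orthogonality \eqref{orthoschur} annihilates the mixed constant-versus-Schur cross terms and forces the double Schur sum onto its diagonal, leaving
$$
1 + (C-A)(D-B) \sum_{b=0}^{N-1}(AB)^b \sum_{a \geq 1}(CD)^{a-1} = 1 + \frac{(C-A)(D-B)\bigl(1 - (AB)^N\bigr)}{(1-AB)(1-CD)}.
$$
The final step is an algebraic identification with the right-hand side of \eqref{ratiothm}: the piece from the $1$ in $1 - (AB)^N$ combines with the leading $1$ via the identity $(1-AB)(1-CD) + (C-A)(D-B) = (1-BC)(1-AD)$ to produce the first summand, while the piece from $-(AB)^N$ yields the second summand after the rewrites $(C - A)(D - B) = AB(1 - CA^{-1})(1 - DB^{-1})$ and $1 - AB = -AB(1 - (AB)^{-1})$.

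The main obstacle I anticipate is clean bookkeeping at the two boundaries of the Pieri expansion: at $n = 0$, where the formula requires the convention $s_{(m+1, 1^{-1})} = 0$ since $e_0 h_m = s_{(m)}$; and at $n = N$, where $e_N h_m$ degenerates to a single Schur function so that the pair $(n,m) = (N, a-1)$ contributes only to the $s_{(a, 1^{N-1})}$ coefficient (and $(n,m) = (N, a)$ never produces $s_{(a, 1^N)}$, consistent with the length cap on $U(N)$-partitions). Once it is verified that the uniform formula $(-A)^b C^{a-1}(C-A)$ survives unmodified at the endpoints $b = 0$ and $b = N - 1$, the remainder is routine geometric-series summation and the one-line algebraic identity above.
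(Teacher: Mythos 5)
Your argument is correct, and it takes a genuinely different route from the paper. The paper does not touch the Pieri-rule expansion of the ratio directly; instead it reparametrizes $A=e^{-\beta_1+s_1}$, $C=e^{-\beta_1}$, etc., expresses the left side of \eqref{ratiothm} as a double Taylor series in $s_1,s_2$ with coefficients $\int_{U(N)} H_j^{(r)}\overline{H_k^{(s)}}\,dg$, likewise expands the right side, and matches coefficients using Lemma \ref{H_Covar}; the ratio theorem is thereby exhibited as being \emph{equivalent} to that lemma. You instead bypass the $H_j^{(r)}$ altogether: you decompose $\det(1-Au)/\det(1-Cu)$ directly into hook Schur functions via the same Pieri identities $e_nh_m=s_{(m+1,1^{n-1})}+s_{(m,1^n)}$ (with the endpoint conventions at $n=0$ and $n=N$ you correctly note), apply Schur orthogonality, sum geometric series, and finish with a one-line algebraic identity. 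This is essentially the Bump--Gamburd approach the paper mentions in passing. What the paper's route buys is the explicit logical connection between the ratio theorem and the covariance identity for $H_j^{(r)}$; what yours buys is a shorter, self-contained proof that needs no generating-function reparametrization and no appeal to Lemma \ref{H_Covar} itself (only to the Pieri computation inside its proof). Both arguments are valid, and your bookkeeping at the boundaries $b=0$ and $b=N-1$ is handled correctly; the only thing worth adding for completeness is the remark that absolute convergence of the $h_m C^m$ and $h_m D^m$ series (guaranteed by $|C|,|D|<1$ and the eigenvalues lying on the unit circle) justifies interchanging the integral with the double sum.
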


\begin{proof}
We let
$$
A=e^{-\beta_1+s_1}
$$
$$
B=e^{-\beta_2+s_2}
$$
$$
C = e^{-\beta_1}
$$
$$
D = e^{-\beta_2}
$$
with $\Re \beta_1, \Re \beta_2 > 0$. (There is no real loss of generality to assume $A,B,C$ and $D$ are real, and this would make less to keep track of in the argument that follows if the reader desires.)

Then the left hand side of \eqref{ratiothm} is
\begin{align*}
&\int_{U(N)} \frac{Z(\beta_1-s_1)}{Z(\beta_1)} \overline{\bigg(\frac{Z(\overline{\beta_2}-\overline{s_2})}{Z(\overline{\beta_2})}\bigg)}\,du \\
&= \sum_{j,k=0}^\infty \frac{s_1^j}{j!}\frac{s_2^k}{k!} \int_{U(N)}\bigg((-1)^j\frac{Z^{(j)}}{Z}(\beta_1)\bigg)\overline{\bigg((-1)^k \frac{Z^{(k)}}{Z}(\overline{\beta_2})\bigg)}\,du \\
&= 1 + \sum_{j,k=1}^\infty \sum_{r,s=1}^\infty \frac{s_1^j}{j!}\frac{s_2^k}{k!} e^{-r\beta_1-s\beta_2} \int_{U(N)}H_j^{(r)}\overline{H_k^{(s)}}\,dg.
\end{align*}
Here we have used that,
$$
\int_{U(N)} \frac{Z^{(0)}}{Z}(\beta_1)\overline{\frac{Z^{(0)}}{Z}(\overline{\beta_2})}\,du = 1
$$
and slightly less trivially that
$$
\int_{U(N)}\frac{Z^{(j)}}{Z}(\beta)\,du = 0
$$
for $j \geq 1$, which follows from, for instance, equations \eqref{Zjto_e}, \eqref{Zto_h} (with all exponents of $\omega$ being positive in both identities) and that for any $\theta$, $u\mapsto e^{i2\pi \theta} u$ preserves the Haar measure of the unitary group.

On the other hand, after rearranging the right hand side of \eqref{ratiothm}, it is just
\begin{align*}
&1 + \frac{(1-e^{-s_1})(1-e^{-s_2})}{1-e^{-\beta_1-\beta_2}}e^{s_1+s_2-\beta_1-\beta_2}\bigg(\frac{1-(e^{s_1+s_2-\beta_1-\beta_2})^N}{1-e^{s_1+s_2-\beta_1-\beta_2}}\bigg) \\
&= 1 + \frac{(1-e^{-s_1})(1-e^{-s_2})}{1-e^{-\beta_1-\beta_2}}\sum_{\nu=1}^N e^{\nu(s_1+s_2)} e^{-\nu(\beta_1+\beta_2)} \\
&= 1 + (1-e^{-s_1})(1-e^{-s_2}) \sum_{\nu=1}^N e^{\nu(s_1+s_2)}\sum_{r=\nu}^\infty e^{-(\beta_1+\beta_2)r} \\
& = 1 + \sum_{r=1}^\infty e^{-(\beta_1+\beta_2)r} \sum_{\substack{\nu \leq r \\ \nu \leq N}} \big[e^{\nu s_1}- e^{(\nu-1) s_1}\big] \big[e^{\nu s_2} - e^{(\nu-1)s_2}\big] \\
&= 1 + \sum_{j,k=1}^\infty \frac{s_1^j}{j!} \frac{s_2^k}{k!} \sum_{r=1}^\infty e^{-(\beta_1+\beta_2)r} \sum_{\nu=1}^{r \wedge N} \big(\nu^j - (\nu-1)^j\big)\big(\nu^k - (\nu-1)^k\big).
\end{align*}
By pairing coefficients, Lemma \ref{H_Covar} is therefore equivalent to the right and left hand sides of \eqref{ratiothm} being equal.
\end{proof}

\subsection{} 
Working over $\bb F_q[T]$, we have seen in the explicit formula \eqref{explicit1} that counts of primes by the von Mangoldt function, twisted by a random character, correspond to the trace of a random matrix. Likewise, counts of almost-primes correspond to the quantities $H_j^{(n)}$, in the sense of \eqref{explicit2}. (A correspondence of this sort remains true in the number field setting as well, at least under random matrix predictions for the zeros of the zeta function and other L-functions.) 

It is therefore natural to ask what arithmetic function corresponds to the Schur functions, and at least for hook Schur functions the answer is not complicated: they correspond to a truncated sum of the M\"{o}bius function over divisors. 

More precisely, define
$$
\delta_m(f):= \primesum_{\substack{d|f \\ \deg(d) < m}} \mu(d).
$$
Then for a primitive character $\chi \MOD T^{M+1})$ with Frobenius matrix $\Theta_\chi$,
\begin{equation}
\label{schur}
\frac{(-1)^k}{q^{(m+k)/2}}\sum_{f \in \mathcal{M}_{m+k}}\delta_m(f)\chi(f) = s_{(m,1^k)}(\Theta_\chi) + O_{m,k,M}(q^{-1/2}),
\end{equation}
with the convention that 
$$
s_{(m,1^k)}=0, \quad \textrm{ for } k \geq M -\lambda_\chi.
$$
In fact, for $\Theta_\chi$ an odd character, \eqref{schur} holds with equality.

Schur functions that do not correspond to a hook partition can be generated from those that do by means of, for instance, the Giambelli identity (Exercise 7.39 in \cite{St}).

There are many way to verify \eqref{schur}, none very arduous. We take a route whose starting point is the explicit formula \eqref{explicit1} and its generalization \eqref{explicit2}. This method is slightly more roundabout than necessary, but we take it because the explicit formula has an elegant counterpart in the number field setting.

Note that
\begin{align}
\label{lin_comb}
\sum_{f\in \mathcal{M}_n} \chi(f)\Lambda_j(f) &= \sum_{f\in \mathcal{M}_n} \primesum_{d | f} \mu(d) \deg^j(f/d) \\
\notag &= \sum_{\nu=1}^n (-1)^\nu\big(\nu^j - (\nu-1)^j\big)\bigg((-1)^{\nu-1} \sum_{f\in\mathcal{M}_n}\chi(f)\delta_{n-\nu+1}(f)\bigg).
\end{align}
The left hand side of \eqref{lin_comb} is equal to $q^{n/2} H_j^{(n)}(\Theta_\chi)$ plus a small error term, while the right hand side corresponds exactly to the formula \eqref{HjtoSchur}. Because $j$ is free while $\nu$ ranges over only a finite number of places, we may compare coefficients to obtain \eqref{schur}.

This correspondence suggests that the simple algebraic form of Theorem \eqref{Covar_fin} and Conjecture \eqref{APrime_covar} is due more to the presence of the M\"{o}bius function in the formulas for the higher von Mangoldt weights \eqref{Lambda_j} than anything else. 

Indeed, if we allow ourselves to speculate, it seems likely that the arithmetic counterpart to the Schur orthogonality relation \eqref{orthoschur} is in some way a consequence of the conjectured randomness of the M\"{o}bius function,\footnote{See \cite{Sa} for an introduction to some aspects of this, along with \cite{CaRu} for recent progress in a function field setting.} but we cannot offer here any precise statement of this sort. 

\end{appendix}

\bibliographystyle{plain}

\end{document}